\newcommand{\noopsort}[1]{}
\DeclareMathOperator{\NP}{NP}
\DeclareMathOperator{\val}{val}
\DeclareMathOperator{\lcm}{lcm}
\newcommand{\Z}{\mathbb Z}
\newcommand{\Zp}{\Z_p}
\newcommand{\Q}{\mathbb Q}
\newcommand{\Qp}{\Q_p}
\newcommand{\R}{\mathbb R}
\newcommand{\softO}{O\tilde{~}}
\newcommand{\NF}{\text{\rm NF}}
\renewcommand{\mod}{\,\%\,}
\renewcommand{\div}{\,/\hspace{-0.3em}/\,}
\newcommand{\nfop}[1]{%
\raisebox{-0.1mm}{\begin{tikzpicture}
\draw[transparent] (-0.18,0)--(0.18,0);
\draw[very thin] (0,0) circle (0.11cm);
\begin{scope}
\clip (0,0) circle (0.11cm);
\node[scale=0.75] at (0,0) { $#1$ };
\end{scope}
\end{tikzpicture}}}
\newcommand{\nfplus}{\nfop+}
\newcommand{\nftimes}{\nfop\times}
\newcommand{\nfmod}{\nfop\mod}
\newcommand{\nfdiv}{\nfop\div}
\newcommand{\app}{\textrm{app}}
\newcommand{\Epi}{\textrm{Epi}}
\renewcommand{\c}{\text{\rm c}}
\newcommand{\tinyplus}{\raisebox{0.4mm}{\hspace{0.4mm}\tiny +\hspace{0.4mm}}}
\definecolor{purple}{rgb}{0.6,0,0.6}
\begin{document}

\newtheorem{theo}{Theorem}[section]
\newtheorem{lem}[theo]{Lemma}
\newtheorem{prop}[theo]{Proposition}
\newtheorem{cor}[theo]{Corollary}
\newtheorem{quest}[theo]{Question}
\newtheorem{conj}[theo]{Conjecture}
\theoremstyle{definition}
\newtheorem{rem}[theo]{Remark}
\newtheorem{ex}[theo]{Example}
\newtheorem{deftn}[theo]{Definition}

\title{Division and Slope Factorization of p-Adic Polynomials}

\numberofauthors{3}
\author{
\alignauthor Xavier Caruso\\
  \affaddr{Universit\'e Rennes 1}\\
  \affaddr{\textsf{xavier.caruso@normalesup.org}}
\alignauthor David Roe \\
  \affaddr{University of Pittsburgh}\\
  \affaddr{\textsf{roed.math@gmail.com}}
\alignauthor Tristan Vaccon\\
  \affaddr{JSPS--Rikkyo University}\\
  \affaddr{\textsf{vaccon@rikkyo.ac.jp}}
}

\maketitle

\begin{abstract}
We study two important operations on polynomials defined over complete 
discrete valuation fields: Euclidean division and factorization. In particular,
we design a simple and efficient algorithm for computing slope factorizations,
based on Newton iteration.  One of its main features is that we avoid working
with fractional exponents.
We pay particular attention to stability, and analyze the behavior
of the algorithm using several precision models.
\end{abstract}

\category{I.1.2}{Computing Methodologies}{Symbolic and Algebraic Manipulation -- \emph{Algebraic Algorithms}}
\terms{Algorithms, Theory}
\keywords{Algorithms, $p$-adic precision, Newton polygon, factorization}

%
%

\section{Introduction}

Polynomial factorization is a fundamental problem in computational algebra.
The algorithms used to solve it depend on the ring of coefficients, with finite fields,
local fields, number fields and rings of integers of particular interest to number theorists.
In this article, we focus on a task that forms a building block for factorization
algorithms over complete discrete valuation fields: the decomposition into factors based on the
slopes of the Newton polygon.

The Newton polygon of a polynomial $f(X) = \sum a_i X^i$ over such a field is given by
the convex hull of the points $(i, \val(a_i))$ and the point $(0, +\infty)$.  The lower
boundary of this polygon consists of line segments $(x_j,y_j)$ -- $(x_{j+1},y_{j+1})$
of slope $s_j$.  The slope factorization of $f(X)$ expresses $f(X)$ as a product
of polynomials $g_j(X)$ with degree $x_{j+1} - x_j$ whose roots all have valuation $-s_j$.
Our main result is a new algorithm for computing these $g_j(X)$.

Polynomial factorization over local fields has seen a great deal of progress recently
\citelist{\cite{pauli:10a} \cite{guardia-nart-pauli:12a} \cite{guardia-montes-nart:08a} \cite{montes:99a}}
following an algorithm of Montes.  Slope factorization provides a subroutine
in such algorithms \cite[Section~2]{pauli:10a}.  For the most difficult inputs, it is not the dominant
contributor to the runtime of the algorithm, but in some circumstances it will be.
We underline moreover that the methods introduced in this paper extend 
partially to the noncommutative setting and appear this way as an
essential building block in several decomposition algorithms of 
$p$-adic Galois representations and $p$-adic differential 
equations~\cite{caruso:16}.

Any computation with $p$-adic fields must work with approximations modulo finite
powers of $p$, and one of the key requirements in designing an algorithm is
an analysis of how the precision of the variables evolve over the computation.
We work with precision models developed by the same authors
\cite[Section~4.2]{caruso-roe-vaccon:14a}, focusing on the lattice and Newton models.
As part of the analysis of the slope factorization algorithm, we describe
how the precision of the quotient and remainder depend on the input polynomials
in Euclidean division.

\medskip

\noindent
{\bf Main Results.}
Suppose that the Newton polygon of $P(X) = \sum_{i=0}^n a_i X^i$
has a break at $i=d$.  Set $A_0 = \sum_{i=0}^d a_i X^i$, $V_0 = 1$ and
\begin{align*}
A_{i+1} &= A_i + (V_i P \mod A_i) \\
B_{i+1} &= P \div A_{i+1} \\
V_{i+1} &= (2V_i - V_i^2B_{i+1}) \mod A_{i+1}.
\end{align*}
Our main result is Theorem \ref{theo:slope-factor},
which states that the sequence $(A_i)$ converges quadratically
to a divisor of $P$. This provides a quasi-optimal simple-to-implement 
algorithm for computing slope factorizations.
We moreover carry out a careful study of the precision and, applying
a strategy coming from~\cite{caruso-roe-vaccon:14a}, we end up with
an algorithm that outputs optimal results regarding to accuracy.

In order to prove Theorem~\ref{theo:slope-factor}, we also determine
the precision of the quotient and remainder in Euclidean division,
which may be of independent interest.  These results are found in
Section \ref{sec:prec_track}.

\medskip

\noindent
{\bf Organization of the paper.}
After setting notation, in
Section \ref{sec:prec_data} we recall various models for tracking
precision in polynomial arithmetic.  We give some background
on Newton polygons and explain how using lattices to store
precision can allow for extra \emph{diffuse} $p$-adic digits that
are not localized on any single coefficient.

In Section \ref{sec:quo_rem}, we consider Euclidean division.
We describe in Theorem \ref{theo:EDivisionNP}
how the Newton polygons of the quotient and remainder
depend on numerator and denominator.  We use this
result to describe in Proposition \ref{prop:NewtonprecEuclide}
the precision evolution in Euclidean division using
the Newton precision model.  We then compare the
precision performance of Euclidean division in the
jagged, Newton and lattice models experimentally,
finding different behavior depending on the modulus.

Finally, in Section \ref{sec:slope_fac} we describe
our slope factorization algorithm, which is based on
a Newton iteration.  Unlike other algorithms for
slope factorization, ours does not require working
with fractional exponents. 
In Theorem \ref{theo:slope-factor} we define a sequence
of polynomials that will converge to the factors
determined by an extremal point in the Newton
polygon.  We then discuss the precision behavior
of the algorithm.

\medskip

\noindent
{\bf Notations.}
Throughout this paper, we fix a complete discrete valuation field $K$; 
we denote by $\val : K \to \Z \cup \{+\infty\}$ the valuation on it and 
by $W$ its ring of integers (\emph{i.e.} the set of elements with 
nonnegative valuation). We assume that $\val$ is normalized so that it
is surjective and denote by $\pi$ a uniformizer of $K$, that is an 
element of valuation $1$. Denoting by $S \subset W$ a fixed set of 
representatives of the classes modulo $\pi$ and assuming $0 \in S$, 
one can prove that each element in $x \in K$ can be represented 
uniquely as a convergent series:
\begin{equation}
\label{eq:CDVFseries}
x = \sum_{i=\val(x)}^{+\infty} a_i \pi^i
\quad \text{with} \quad a_i \in S.
\end{equation}
The two most important examples are the field of $p$-adic numbers $K = 
\Qp$ and the field of Laurent series $K = k((t))$ over a field $k$. The 
valuation on them are the $p$-adic valuation and the usual valuation of 
a Laurent series respectively. Their ring of integers are therefore 
$\Zp$ and $k[[t]]$ respectively. A distinguished uniformizer is $p$ 
and $t$ whereas a possible set $S$ is $\{0, \ldots, p-1\}$ and $k$
respectively.
The reader who is not familiar with complete discrete valuation fields
may assume (without sacrifying too much to the generality) that $K$ is
one of the two aforementioned examples.

In what follows, the notation $K[X]$ refers to the ring of univariate 
polynomials with coefficients in $K$. The subspace of polynomials of 
degree at most $n$ (resp. exactly $n$) is denoted by $K_{\leq n}[X]$ 
(resp. $K_{=n}[X]$).

\section{Precision data} \label{sec:prec_data}

Elements in $K$ (and \emph{a fortiori} in $K[X]$) carry an infinite 
amount of information. They thus cannot be stored entirely in the
memory of a computer and have to be truncated. Elements of $K$ are
usually represented by truncating Eq.\eqref{eq:CDVFseries} as
follows:
\begin{equation}
\label{eq:CDVFseriesO}
x = \sum_{i=v}^{N-1} a_i \pi^i + O(\pi^N)
\end{equation}
where $N$ is an integer called the \emph{absolute precision} and 
the notation $O(\pi^N)$ means that the coefficients $a_i$ for $i
\geq N$ are discarded. If $N > v$ and $a_v \neq 0$, the integer $v$ 
is the valuation of $x$ and the difference $N-v$ is called the
\emph{relative precision}.
Alternatively, one may think that the writing~\eqref{eq:CDVFseriesO}
represents a subset of $K$ which consists of all elements in $K$ for
which the $a_i$'s in the range $[v,N-1]$ are those specified. From the
metric point of view, this is a ball (centered at any point inside it).

It is worth noting that tracking precision using this representation is 
rather easy. For example, if $x$ and $y$ are known with absolute (resp. 
relative) precision $N_x$ and $N_y$ respectively, one can compute the 
sum $x+y$ (resp. the product $xy$) at absolute (resp. relative) 
precision $\min(N_x,N_y)$. Computations with $p$-adic and Laurent
series are often handled this way on symbolic computation softwares.

\subsection{Precision for polynomials}

The situation is much more subtle when we are working with a collection 
of elements of $K$ (\emph{e.g.} a polynomial) and not just a single one.
Indeed, several precision data may be considered and, as we shall see
later, each of them has its own interest. Below we detail three models 
of precision for the special case of polynomials.

\medskip

\noindent
{\bf Flat precision.}
The simplest method for tracking the precision of a polynomial is
to record each coefficient modulo a fixed power of $p$.  While
easy to analyze and implement, this method suffers when
applied to polynomials whose Newton polygons are far from flat.

\medskip

\noindent
{\bf Jagged precision.}
The next obvious approach is to record the precision of each
coefficient individually, a method that we will refer to as \emph{jagged}
precision.  Jagged precision is commonly implemented
in computer algebra systems, since standard polynomial algorithms
can be written for generic coefficient rings.  However, these
generic implementations often have suboptimal precision behavior,
since combining intermediate expressions into a final answer
may lose precision.  Moreover, when compared to the Newton
precision model, extra precision in the middle coefficients, above
the Newton polygon of the remaining terms, will have no effect
on any of the values of that polynomial.

\medskip

\noindent
{\bf Newton precision.} 
We now move to \emph{Newton precision} data. They can be actually seen 
as particular instances of jagged precision but there exist for them 
better representations and better algorithms.

\begin{deftn}
A \emph{Newton function of degree $n$} is a convex function 
$\varphi : [0,n] \to \R \cup \{+\infty\}$ which is piecewise affine, 
which takes a finite value at $n$ and whose epigraph $\Epi(\varphi)$ 
have extremal points with integral abscissa.
\end{deftn}

\begin{rem}
The datum of $\varphi$ is equivalent to that of $\Epi(\varphi)$ and they 
can easily be represented and manipulated on a computer.
\end{rem}

We recall that one can attach a Newton function to each polynomial.
If $P(X) = \sum_{i=0}^n a_n X^n \in K_n[X]$, we define its Newton
polygon $\NP(P)$ as the convex hull of the points $(i,\val(a_i))$ 
($1 \leq i \leq n$) together with the point at infinity $(0,+\infty)$
and then its Newton function $\NF(P) : [0,n] \to \R$ as the unique
function whose epigraph is $\NP(P)$. It is well known \cite[Section~1.6]{dwork-geratto-sullivan:Gfunctions} that:
$$\begin{array}{r@{\,\,}c@{\,\,}l}
\NP(P+Q) & \subset & \text{Conv}\big(\NP(P) \cup \NP(Q)\big) \smallskip \\
\NP(PQ) & = & \NP(P) + \NP(Q)
\end{array}$$
where $\text{Conv}$ denotes the convex hull and the plus sign stands 
for the Minkowski sum. This translates to:
$$\begin{array}{r@{\,\,}c@{\,\,}l}
\NF(P+Q) \geq \NF(P) \nfplus \NF(Q) \smallskip \\
\NF(PQ) = \NF(P) \nftimes \NF(Q)
\end{array}$$
where the operations $\nfplus$ and $\nftimes$ are defined accordingly.
There exist classical algorithms for computing these two operations
whose complexity is quasi-linear with respect to the degree.

In a similar fashion, Newton functions can be used to model precision: 
given a Newton function $\varphi$ of degree $n$, we agree that a polynomial 
of degree at most $n$ is given at precision $O(\varphi)$ when, for all $i$,
its $i$-th coefficient is given at precision $O\big(\pi^{\lceil \varphi(i)
\rceil}\big)$ (where $\lceil \cdot \rceil$ is the ceiling function).
In the sequel, we shall write
$O(\varphi) = \sum_{i=0}^n O\big(\pi^{\lceil \varphi(i) \rceil}\big) \cdot X^i$
and use the notation $\sum_{i=0}^n a_i X^i + O(\varphi)$ (where the
coefficients $a_i$ are given by truncated series) to refer to a 
polynomial given at precision $O(\varphi)$.

It is easily checked that if $P$ and $Q$ are two polynomials known at 
precision $O(\varphi_P)$ and $O(\varphi_Q)$ respectively, then $P+Q$ is 
known at precision $O(\varphi_P \nfplus \varphi_Q)$ and $PQ$ is known at 
precision $O\big((\varphi_P \nftimes \NF(Q)) \nfplus (\NF(P) \nftimes 
\varphi_Q)\big)$.

\begin{deftn}
\label{def:nondeg}
Let $P = P_\app + O(\varphi_P)$. We say that the Newton precision 
$O(\varphi_P)$ on $P$ is \emph{nondegenerate} if $\varphi_P \geq 
\NF(P_\app)$ and $\varphi_P(x) > y$ for all extremal point $(x,y)$ of 
$\NP(P_\app)$.
\end{deftn}

We notice that, under the conditions of the above definition, the
Newton polygon of $P$ is well defined. Indeed, if $\delta P$ is any
polynomial whose Newton function is not less than $\varphi_P$, we
have $\NP(P_\app + \delta P) = \NP(P_\app)$.

\medskip

\noindent
{\bf Lattice precision.}
The notion of \emph{lattice precision} was developed in 
\cite{caruso-roe-vaccon:14a}. It encompasses the two previous models and has
the decisive advantage of precision optimality.
 As a counterpart, it might be very space-consuming and time-consuming 
 for polynomials of large degree.

\begin{deftn}
Let $V$ be a finite dimensional vector space over $K$. A lattice
in $V$ is a sub-$W$-module of $V$ generated by a $K$-basis of
$V$.
\end{deftn}

\noindent
We fix an integer $n$. A lattice precision datum for a polynomial of 
degree $n$ is a lattice $H$ lying in the vector space $K_{\leq n}[X]$. 
We shall sometimes denote it $O(H)$ in order to emphasize that it should 
be considered as a precision datum. The notation $P_\app(X) + O(H)$ then 
refers to any polynomial in the $W$-affine space $P_\app(X) + H$. 
Tracking lattice precision can be done using differentials as shown in
\cite[Lemma~3.4 and Proposition~3.12]{caruso-roe-vaccon:14a}: if $f : K_{\leq n}[X] 
\to K_{\leq m}[X]$ denotes any strictly differentiable function with
surjective differential, under mild assumption on $H$, we have:
$$f(P_\app(X)+H) = f(P_\app(X)) + f'(P_\app(X))(H)$$
where $f'(P_\app(X))$ denotes the differential of $f$ at $P_\app(X)$. 
The equality sign reflets the optimality of the method.

As already mentioned, the jagged precision model is a particular case of 
the lattice precision. Indeed, a precision of the shape $\sum_{i=0}^n 
O(\pi^{N_i}) X^i$ corresponds to the lattice generated by the elements 
$\pi^{N_i} X^i$ ($0 \leq i \leq n$). This remark is the origin of the 
notion of \emph{diffused digits of precision} introduced in 
\cite[Definition~2.3]{caruso-roe-vaccon:15a}. We shall use it repeatedly in the 
sequel in order to compare the behaviour of the three aforementioned 
precision data in concrete situations.

\section{Euclidean division} \label{sec:quo_rem}

Euclidean division provides a building block for many algorithms associated
to polynomials in one variable.  In order to analyze the precision behavior
of such algorithms, we need to first understand the precision attached to
the quotient and remainder when dividing two polynomials.
In the sequel, we use the notation $A \div B$ and $A \mod B$
for the polynomials satisfying $A = (A \div B) \cdot B + (A \mod B)$
and $\deg(A \mod B) < \deg(B)$.

\subsection{Euclidean division of Newton functions}

\begin{deftn}
Let $\varphi$ and $\psi$ be two Newton functions of degree $n$ and
$d$ respectively. Set $\lambda = \psi(d) - \psi(d-1)$. Letting $\Delta$
be the greatest affine function of slope $\lambda$ with $\Delta \leq
\varphi_{|[d,n]}$ and $\delta = \Delta(d) - \psi(d)$, we define:
$$\begin{array}{r@{\,\,}rcl}
\varphi \nfmod \psi & 
\multicolumn{3}{@{}l}{= \varphi_{|[0,d{-}1]} \nfplus \big( 
\psi_{|[0,d{-}1]} + \delta\big)} \medskip \\
\varphi \nfdiv \psi & : \, [0, n-d] & \to & \R \cup \{+\infty\} \smallskip \\
& x & \mapsto & \inf_{h \geq 0} \varphi(x+d+h) - \lambda h.
\end{array}$$
\end{deftn}

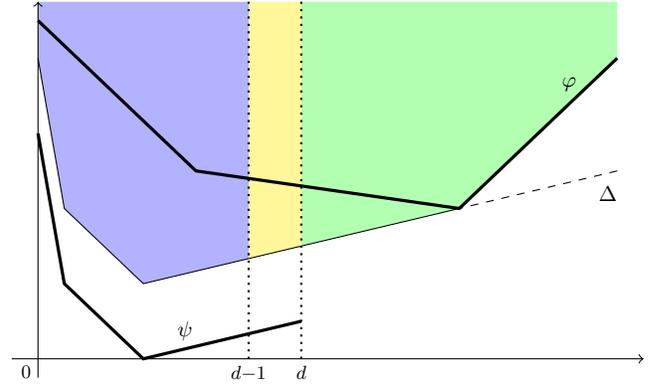
\begin{figure}
\hfill
\begin{tikzpicture}[xscale=0.7, yscale=0.5]
\fill[blue!30] (4,9.5)--(4,2.667)--(2,2)--(0.5,4)--(0,8)--(0,9.5)--cycle;
\fill[yellow!50] (5,9.5)--(5,3)--(4,2.667)--(4,9.5)--cycle;
\fill[green!30] (5,9.5)--(5,3)--(8,4)--(11,8)--(11,9.5)--cycle;
\draw[->] (-0.5,0)--(11.5,0);
\draw[->] (0,-0.5)--(0,9.5);
\draw[very thick] (0,6)--(0.5,2)--(2,0)--(5,1);
\draw[very thick] (0,9)--(3,5)--(8,4)--(11,8);
\draw[dashed] (11,5)--(8,4);
\draw (8,4)--(2,2)--(0.5,4)--(0,8);
\draw[dotted,thick] (5,0)--(5,9.5);
\draw[dotted,thick] (4,0)--(4,9.5);
\node[scale=0.8, below left] at (0,0) { $0$ };
\node[scale=0.8, below] at (5,0) { $d$ };
\node[scale=0.8, below] at (4,0) { $d{-}1$ };
\node[scale=0.9, below right] at (2.5,1.2) { $\psi$ };
\node[scale=0.9, right] at (9.8,7.3) { $\varphi$ };
\node[scale=0.9, right] at (10.5,4.4) { $\Delta$ };
\end{tikzpicture}
\hfill \null

\vspace{-6mm}

\caption{Euclidean division of Newton functions}
\label{fig:NewtonEuclide}
\end{figure}

Figure~\ref{fig:NewtonEuclide} illustrates the definition: if $\varphi$ 
and $\psi$ are the functions represented on the diagram, the epigraph of 
$\varphi \nfmod \psi$ is the blue area whereas that of $\varphi \nfdiv \psi$ 
is the green area translated by $(-d,0)$.
It is an easy exercise (left to the reader) to design quasi-linear
algorithms for computing $\varphi \nfmod \psi$ and $\varphi \nfdiv \psi$.

\begin{theo}
\label{theo:EDivisionNP}
Given $A, B \in K[X]$ with $B \neq 0$, we have:
\begin{align}
\NF(A \mod B) & \geq \NF(A) \nfmod \NF(B) \label{eq:AmodB} \\
\text{and} \hspace{4mm}
\NF(A \div B) & \geq \NF(A) \nfdiv \NF(B) \label{eq:AdivB}
\end{align}
\end{theo}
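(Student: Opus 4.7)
My plan is to prove both inequalities simultaneously by induction on $n = \deg A$.  The base case $n < d$ is essentially trivial: then $Q = 0$ and $R = A$, and (after extending $\NF(R)$ by $+\infty$ beyond $n$) the bound on $\NF(R)$ follows from the pointwise inequality $\varphi \geq \varphi \nfplus (\psi + \delta)$ on $[0,d-1]$, while the bound on $\NF(Q)$ is vacuous since its domain $[0, n-d]$ is empty.

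For the inductive step with $n \geq d$, I would carry out a single step of polynomial long division.  Setting $c = a_n/b_d$ and $A_1 = A - c\,X^{n-d}\,B$, one has $\deg A_1 < n$, together with $A \div B = c\,X^{n-d} + A_1 \div B$ and $A \mod B = A_1 \mod B$.  The inductive hypothesis applied to $A_1$ yields $\NF(A_1 \div B) \geq \NF(A_1) \nfdiv \psi$ and $\NF(A_1 \mod B) \geq \NF(A_1) \nfmod \psi$, and the task is to transfer these bounds from $A_1$ back to $A$.

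The analytic content lies in controlling $\NF(A_1)$ in terms of $\varphi$.  Since $A_1 = A - c\,X^{n-d}\,B$, the additive and multiplicative rules for Newton functions recalled in Section~\ref{sec:prec_data} give $\NF(A_1) \geq \varphi \nfplus \big(\NF(c\,X^{n-d}) \nftimes \psi\big)$.  The second summand is a horizontally translated and vertically shifted copy of $\psi$ whose last slope is still $\lambda$; a direct check identifies its restriction to $[d,n]$ with the affine continuation, by the slope $\lambda$, of the minorant $\Delta$ appearing in the definition of $\nfmod$ and $\nfdiv$.  From this geometric description, comparing with the explicit formulas for $\varphi \nfdiv \psi$ and $\varphi \nfmod \psi$, the desired inequalities follow after absorbing the leading term $c\,X^{n-d}$ back into the quotient.

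The main difficulty I anticipate is the careful bookkeeping in the induction: one needs to check that the contribution of the leading coefficient $c$ to $\NF(Q)$ at $x = n-d$ matches the $h = 0$ term in the infimum defining $\varphi \nfdiv \psi$, and that the inductive bound $\NF(A_1) \nfdiv \psi$ dominates $\varphi \nfdiv \psi$ on the remaining interval $[0,\,n-d-1]$.  This reduces to elementary properties of convex piecewise affine functions and their affine minorants of prescribed slope $\lambda$, but will likely require a case analysis depending on how the last segment of $\varphi$ compares with $\lambda$ (above, below, or equal to it), since each case changes how the $\inf_{h \geq 0}$ is attained and thus how the quantities restructure after subtracting the leading-term contribution.
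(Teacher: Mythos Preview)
Your approach is genuinely different from the paper's, and in principle it can be made to work, but one geometric claim is inaccurate and the bookkeeping you anticipate is real.

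\medskip

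\noindent\textbf{The paper's route.}
The paper does not induct on $\deg A$ via a long-division step. Instead it makes three reductions before any induction: (i) split $A = A_{<d} + A_{\geq d}$ and treat the two pieces separately; (ii) replace $B$ by a monic polynomial (this is harmless for both $\nfmod$ and $\nfdiv$); (iii) use linearity to reduce to the case $A = X^n$. The induction is then on $n$ for $R_n = X^n \bmod B$, using the one-line recursion $R_{n+1} = X R_n - c_n B$ with $c_n$ the coefficient of $X^{d-1}$ in $R_n$. This gives the bound \eqref{eq:AmodB} directly. The bound \eqref{eq:AdivB} is not proved by a parallel induction; it is deduced \emph{a posteriori} from \eqref{eq:AmodB} via
\[
\NF(A \div B) \nftimes \NF(B) \;=\; \NF(A - (A\bmod B)) \;\geq\; \NF(A)\nfplus \NF(A\bmod B).
\]
This decoupling is what keeps the paper's proof short: there is no case analysis and no tracking of how the minorant $\Delta$ changes along the way.

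\medskip

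\noindent\textbf{Where your sketch needs repair.}
You write that the restriction of $\NF(c\,X^{n-d})\nftimes\psi$ to $[d,n]$ ``identifies with the affine continuation, by the slope $\lambda$, of the minorant $\Delta$''. This is not correct as stated: that function is $x\mapsto \val(c)+\psi(x-(n{-}d))$, which is a translated copy of $\psi$ and is \emph{not} affine on $[d,n]$ unless $\psi$ itself has a single slope. What is true (and what you actually need) is only an inequality: since $\psi$ is convex with last slope $\lambda$, one has $\psi(y)\geq\psi(d)+\lambda(y-d)$, hence
\[
\val(c)+\psi\big(x-(n{-}d)\big)\;\geq\;\varphi(n)+\lambda(x-n)\;\geq\;\Delta(x)
\quad\text{for }x\in[n{-}d,n],
\]
because $\Delta(n)\leq\varphi(n)$. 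From this you get that the lower bound $\tilde\varphi=\varphi\nfplus\NF(c\,X^{n-d}B)$ for $\NF(A_1)$ still dominates $\Delta$ on $[d,n{-}1]$, so the new minorant $\tilde\Delta$ satisfies $\tilde\Delta\geq\Delta$ and hence $\tilde\delta\geq\delta$. A separate (and slightly more delicate) check is needed on $[0,d{-}1]$ when $n<2d$, since then the shifted $\psi$ overlaps that interval; there one must use convexity of $\psi$ again to see that $\val(c)+\psi(x-(n{-}d))\geq\psi(x)+\delta$. These are exactly the ``cases'' you anticipate, and they do go through, but they are not the ``direct check'' your sentence suggests.

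\medskip

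\noindent\textbf{What each approach buys.}
Your long-division induction is conceptually natural and treats $Q$ and $R$ in parallel, but it forces you to control how $\Delta$ and $\delta$ evolve after each step, which is where all the convexity arguments above enter. The paper's reduction to monomials trades that for a trivial induction (one inequality per step) and then gets the quotient bound for free from the remainder bound. If you want to keep your route, replace the ``identifies with'' claim by the inequality above and carry out the two cases $n\geq 2d$ and $n<2d$ explicitly; alternatively, you could import the paper's trick of deriving \eqref{eq:AdivB} from \eqref{eq:AmodB} at the end, which would let you drop the simultaneous treatment of $Q$.
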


\begin{proof}
Write $A = A_{<d} + A_{\geq d}$ where $A_{<d}$ (resp. $A_{\geq d}$)
consists of monomials of $A$ of degree less than $d$ (resp. at least
$d$). Noting that:
$$A \mod B = A_{<d} + (A_{\geq d} \mod B)
\quad \text{and} \quad
A \div B = A_{\geq d} \div B$$
we may assume that $A = A_{\geq d}$.

Let us now prove Eq.~\eqref{eq:AmodB}. 
Replacing $B$ by $c^{-1} B$ where $c$ denotes the leading coefficient
of $B$, we may assume that $B$ is monic. Using linearity, we may further
assume that $A$ is a monomial. Set $R_n = X^n \mod B$. The relation
we have to prove is:
$$\NF(R_n)(x) \geq \NF(B)(x) - \lambda(n-d)
\quad \text{for } n\geq d.$$
We proceed by induction. The initialisation is clear because $R_d$
agrees with $(-B)$ up to degree $d{-}1$. We have the relation
$R_{n+1} = X R_n - c_n B$
where $c_n$ is the coefficient in $X^{d-1}$ of $R_n$. Thanks to the
induction hypothesis, we have:
\begin{align*}
\val(c_n) \geq \NF(R_n)(d{-}1) & \geq \NF(B)(d{-}1) - \lambda(n{-}d) \\
& = - \lambda(n{+}1{-}d)
\end{align*}
since $\lambda = -\NF(B)(d{-}1)$ because $B$ is monic. Therefore
$\NF(c_n B)(x) \geq \NF(B)(x) - \lambda(n{+}1{-}d)$ for all $x$. On the 
other hand, for all $x$, we have:
$$\NF(X R_n)(x) = \NF(R_n)(x{-}1) \geq \NF(R_n)(x) - \lambda$$
from what we get $\NF(X R_n)(x) \geq \NF(B)(x) - \lambda(n{+}1{-}d)$. As a
consequence
$\NF(R_{n+1})(x) \geq \NF(B)(x) - \lambda(n{+}1{-}d)$
and the induction follows.
Eq.~\eqref{eq:AdivB} is now derived from:
$$\NF(A \div B) \nftimes \NF(B) \geq \NF(A) \nfplus \NF(A \mod B)$$
using the estimation on $\NF(A\mod B)$ we have just proved
(see Figure~\ref{fig:NewtonEuclide}).
\end{proof}

\subsection{Tracking precision} \label{sec:prec_track}

\medskip

\noindent
{\bf Newton precision.}
We first analyze the precision behavior of Euclidean division
in the Newton model.  Concretely, we pick $A, B \in K[X]$ two polynomials which 
are known at precision $O(\varphi_A)$ and $O(\varphi_B)$ respectively:
$$A = A_\app + O(\varphi_A)
\quad \text{and} \quad
B = B_\app + O(\varphi_B)$$
Here $A_\app$ and $B_\app$ are some approximations of $A$ and $B$ respectively 
and $\varphi_A$ and $\varphi_B$ denotes two Newton functions of degree 
$\deg A$ and $\deg B$ respectively.
We are interested in determining the precision on $A \mod B$ and $A
\div B$. The following proposition gives a theoretical answer under
mild assumptions.

\begin{prop}
\label{prop:NewtonprecEuclide}
We keep the above notations and assume that the Newton precisions
$O(\varphi_A)$ and $O(\varphi_B)$ on $A$ and $B$ respectively are 
both nondegenerate (\emph{cf} Definition~\ref{def:nondeg}). Then,
setting:
$$\varphi = \varphi_A \nfplus 
\big[ \varphi_B \nftimes \big(\NF(A) \nfdiv \NF(B)\big) \big]$$
the polynomials $A \div B$ and $A \mod B$ are known at precision 
$O(\varphi \nfdiv \NF(B))$ and $O(\varphi \nfmod \NF(B))$ respectively.
\end{prop}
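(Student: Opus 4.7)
The plan is to reduce the precision analysis of Euclidean division to two successive applications of Theorem \ref{theo:EDivisionNP}, connected by the algebraic identity that expresses the error of an approximate division as an exact Euclidean division of a controlled polynomial.

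First, I would introduce the approximations. Write $A = A_\app + \delta A$ and $B = B_\app + \delta B$ with $\NF(\delta A) \geq \varphi_A$ and $\NF(\delta B) \geq \varphi_B$. Set $Q = A \div B$, $R = A \mod B$, $Q_\app = A_\app \div B_\app$, $R_\app = A_\app \mod B_\app$. The nondegeneracy assumptions ensure that $\NF(A) = \NF(A_\app)$ and $\NF(B) = \NF(B_\app)$ (so that in particular $B$ has the same degree as $B_\app$ and all the Euclidean divisions are well defined), by the remark following Definition~\ref{def:nondeg}.

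Next, I would establish the key algebraic identity. Subtracting $A_\app = Q_\app B_\app + R_\app$ from $A = QB + R$ and rearranging the cross terms $QB - Q_\app B_\app = (Q - Q_\app) B + Q_\app \delta B$ yields
\begin{equation*}
(Q - Q_\app) \cdot B + (R - R_\app) = \delta A - Q_\app \delta B.
\end{equation*}
Set $U = \delta A - Q_\app \delta B$. Since $\deg(R - R_\app) < \deg B$, this identity says precisely that $Q - Q_\app = U \div B$ and $R - R_\app = U \mod B$. So the proof reduces to bounding $\NF(U)$ from below and then applying Theorem~\ref{theo:EDivisionNP}.

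The bound on $U$ comes from a first use of Theorem~\ref{theo:EDivisionNP}: $\NF(Q_\app) \geq \NF(A_\app) \nfdiv \NF(B_\app) = \NF(A) \nfdiv \NF(B)$, so combining with $\NF(\delta A) \geq \varphi_A$ and $\NF(\delta B) \geq \varphi_B$ and the behaviour of Newton functions under sum and product recalled in Section~\ref{sec:prec_data} gives
\begin{equation*}
\NF(U) \geq \varphi_A \nfplus \bigl[\varphi_B \nftimes (\NF(A) \nfdiv \NF(B))\bigr] = \varphi.
\end{equation*}
Applying Theorem~\ref{theo:EDivisionNP} once more to $U$ divided by $B$, and using that $\nfdiv$ and $\nfmod$ are monotonic in their first argument (which is a direct verification from the definition, since both are defined as infima or translates of $\varphi_{|[0,d-1]}$), we conclude $\NF(Q - Q_\app) \geq \varphi \nfdiv \NF(B)$ and $\NF(R - R_\app) \geq \varphi \nfmod \NF(B)$, which is the claim.

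The only subtle point is the invocation of nondegeneracy: it is needed both to make $\NF(A) \nfdiv \NF(B)$ a meaningful quantity (independent of the choice of representative $A_\app, B_\app$) and to guarantee that $B$ has the correct leading term so that the Euclidean division by $B$ behaves as Theorem~\ref{theo:EDivisionNP} predicts. Beyond that, the argument is a bookkeeping exercise in the tropical operations on Newton functions.
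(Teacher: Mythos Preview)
Your proof is correct and follows essentially the same approach as the paper: both introduce the auxiliary polynomial $U = \delta A - Q_\app\,\delta B$ (the paper calls it $\delta X$), bound its Newton function by $\varphi$ via a first application of Theorem~\ref{theo:EDivisionNP} to $Q_\app$, and then recognize $Q - Q_\app$ and $R - R_\app$ as the quotient and remainder of $U$ by the perturbed divisor, concluding with a second application of Theorem~\ref{theo:EDivisionNP}. Your explicit mention of the monotonicity of $\nfdiv$ and $\nfmod$ in their first argument and of the role of nondegeneracy is slightly more detailed than the paper's treatment, but the argument is the same.
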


\begin{proof}
Let $\delta A$ (resp. $\delta B$) be a polynomial whose Newton 
function is not less than $\varphi_A$ (resp. $\varphi_B$) and define
$\delta Q$ and $\delta R$ by:
\begin{align*}
Q_\app + \delta Q & = (A_\app + \delta A) \div (B_\app + \delta B) \\
R_\app + \delta R & = (A_\app + \delta A) \mod (B_\app + \delta B)
\end{align*}
where $Q_\app = A_\app \div B_\app$ and $R_\app = A_\app \mod B_\app$.
We have to show that $\NF(\delta Q) \geq \varphi \nfdiv \NF(B)$ and
$\NF(\delta R) \geq \varphi \nfmod \NF(B)$.
Set $\delta X = \delta A - Q_\app \delta B$. 
Using Theorem~\ref{theo:EDivisionNP}, we obtain $\NF(Q_\app) \geq
\NF(A) \nfdiv \NF(B)$ and consequently $\NF(\delta X) \geq \varphi$.
On the other hand, an easy computation yields
$$\delta X = (B + \delta B) \cdot \delta Q + \delta R$$
so that $\delta Q = \delta X \div (B + \delta B)$ and 
$\delta R = \delta X \mod (B + \delta B)$. Using again 
Theorem~\ref{theo:EDivisionNP}, we get the desired result.
\end{proof}

With this result in hand, we may split the computation of
Euclidean division into two pieces, first computing approximations
$Q_\app$ and $R_\app$ and separately computing $\delta Q$ and $\delta R$.
Both the approximations and the precision can be computing in time that is
quasi-linear in the degree. 

\medskip

\noindent
{\bf Lattice precision.}
We now move to lattice precision. We pick $A$ and $B$ two polynomials 
of respective degree $n$ and $d$ and assume that they are known at 
precision $O(H_A)$ and $O(H_B)$ respectively:
$$A = A_\app + O(H_A)
\quad \text{and} \quad
B = B_\app + O(H_B).$$
where $H_A \in K_{\leq n}[X]$ and $H_B \in K_{\leq d}[X]$ are lattices. 
According to the results of \cite{caruso-roe-vaccon:14a}, in order to determine the 
precision on $A \div B$ and $A \mod B$, we need to compute the 
differential of the mappings $(X,Y) \mapsto X \div Y$ and $(X,Y)
\mapsto X \mod Y$ at the point $(A_\app, B_\app)$. Writing $Q_\app = A_\app \div
B_\app$ and $R_\app = A_\app \mod B_\app$, this can be done by
expanding the relation:
$$A_\app + dA = (B_\app + dB) (Q_\app + dQ) + (R_\app + dR)$$
and neglecting the terms of order $\geq 2$. We get this way 
$dA = B_\app dQ + Q_\app dB + dR$
meaning that $dQ$ and $dR$ appears respectively as the quotient and
the remainder of the Euclidean division of $dX = dA - Q_\app dB$ by $B_\app$.

Once this has been done, the strategy is quite similar to that explained 
for Newton precision: compute approximations and precision lattices
separately for quotient and remainder.

\subsection{An example: modular multiplication}

For this example, we work over $W = \Z_2$ and fix a monic polynomial $M 
\in \Z[X]$ (known exactly) of degree $5$. Our aim is to compare the 
numerical stability of the multiplication in the quotient $\Z_2[X]/M$ 
depending on the precision model we are using. In order to do so, we 
pick $n$ random polynomials $P_1, \ldots, P_n$ in $\Z_2[X]/M(X)$ 
(according to the Haar measure) whose coefficients are all known at 
precision $O(2^N)$ for some large integer $N$. We then compute the 
product of the $P_i$'s using the following quite naive algorithm.

\noindent\hrulefill

%

\noindent 1.\ {\bf set} $P = 1$

\noindent 2.\ {\bf for} $i=1,\dots,n$ {\bf do} {\bf compute} $P = (P 
\cdot P_i) \mod M$

\noindent 3.\ {\bf return} $P$

\vspace{-1ex}\noindent\hrulefill

\medskip

The table of Figure~\ref{fig:modularmult} reports the average gain of 
\emph{absolute} precision $G$ which is observed while executing the 
algorithm above for various modulus and $n$. The average is taken on a 
sample of $1000$ random inputs. We recall that $G$ is defined as 
follows:

\noindent $\bullet$
in the case of jagged and Newton precision, the precision on the output 
may be written into the form $\sum_{i=0}^4 O(2^{N_i}) \: X^i$ and 
$G = \sum_{i=0}^4 (N_i - N)$;

\noindent $\bullet$
in the case of lattice precision, the precision on the output is a 
lattice $H$ and $G$ is the index of $H$ in $2^N \mathcal L$ where 
$\mathcal L = \Z_2[X]/M$ is the standard lattice; in that case, we write 
$G$ as a sum $G_{\text{nd}} + G_{\text{d}}$ where $G_{\text{d}}$ is the 
index of $H$ in the largest lattice $H_0$ contained in $H$ which can be 
generated by elements of the shape $2^{N_i} X^i$ ($0 \leq i \leq 4$). 
(The term $G_d$ corresponds to diffused digits according to 
\cite[Definition~2.3]{caruso-roe-vaccon:15a}.)

\begin{figure}
{\small%
\noindent \hfill%
\renewcommand{\arraystretch}{1.2}%
\begin{tabular}{|c|c|c|c|c|}%
\hline
\multirow{3}{*}{Modulus $M$} & 
\multirow{3}{*}{\hspace{2mm}$n$\hspace{2mm}} & 
\multicolumn{3}{c|}{Gain of precision} \\
\cline{3-5}
& & \multirow{2}{*}{Jagged} & \multirow{2}{*}{Newton}
      & \raisebox{-0.5mm}{Lattice} \\
& & & & \raisebox{0.5mm}{\scriptsize (not dif.\tinyplus dif.)} \\
\hline
\multirow{3}{*}{\begin{tabular}{@{}c@{}} $X^5 + X^2 + 1$ \\ {\scriptsize (Irred. mod $2$)} \end{tabular}}
& $10$ & $\phantom{00}0.2$ & $\phantom{00}0.2$ & $\phantom{00}0.2\tinyplus\phantom{00}0.0$ \\
& $50$ & $\phantom{00}4.2$ & $\phantom{00}4.2$ & $\phantom{00}4.2\tinyplus\phantom{00}0.0$ \\
& $100$ & $\phantom{0}11.2$ & $\phantom{0}11.2$ & $\phantom{0}11.2\tinyplus\phantom{00}0.0$ \\
\hline
\multirow{3}{*}{\begin{tabular}{@{}c@{}} $X^5 + 1$ \\ {\scriptsize (Sep. mod $2$)} \end{tabular}}
& $10$ & $\phantom{00}0.4$ & $\phantom{00}0.4$ & $\phantom{00}0.9\tinyplus\phantom{00}6.0$ \\
& $50$ & $\phantom{00}5.6$ & $\phantom{00}5.6$ & $\phantom{0}11.1\tinyplus\phantom{0}42.0$ \\
& $100$ & $\phantom{0}13.6$ & $\phantom{0}13.6$ & $\phantom{0}27.0\tinyplus\phantom{0}87.0$ \\
\hline
\multirow{3}{*}{\begin{tabular}{@{}c@{}} $X^5 + 2$ \\ {\scriptsize (Eisenstein)} \end{tabular}}
& $10$ & $\phantom{00}6.2$ & $\phantom{00}6.2$ & $\phantom{00}6.2\tinyplus\phantom{00}0.0$ \\
& $50$ & $\phantom{0}44.0$ & $\phantom{0}44.0$ & $\phantom{0}44.0\tinyplus\phantom{00}0.0$ \\
& $100$ & $\phantom{0}92.5$ & $\phantom{0}92.5$ & $\phantom{0}92.5\tinyplus\phantom{00}0.0$ \\
\hline
\multirow{3}{*}{\begin{tabular}{@{}c@{}} $(X+1)^5 + 2$ \\ {\scriptsize (Shift Eisenstein)} \end{tabular}}
& $10$ & $\phantom{00}0.6$ & $\phantom{00}0.6$ & $\phantom{00}4.7\tinyplus\phantom{00}1.4$ \\
& $50$ & $\phantom{00}7.1$ & $\phantom{00}7.1$ & $\phantom{0}42.6\tinyplus\phantom{00}1.4$ \\
& $100$ & $\phantom{0}15.1$ & $\phantom{0}15.1$ & $\phantom{0}91.8\tinyplus\phantom{00}1.4$ \\
\hline
\multirow{3}{*}{\begin{tabular}{@{}c@{}} $X^5 + X + 2$ \\ {\scriptsize (Two slopes)} \end{tabular}}
& $10$ & $\phantom{00}1.7$ & $\phantom{00}1.7$ & $\phantom{00}7.9\tinyplus\phantom{00}9.8$ \\
& $50$ & $\phantom{00}8.1$ & $\phantom{00}8.1$ & $\phantom{0}70.7\tinyplus\phantom{0}59.8$ \\
& $100$ & $\phantom{0}16.1$ & $\phantom{0}16.1$ & $152.6\tinyplus125.9$ \\
\hline
\end{tabular}
\hfill \null}

\caption{Precision for modular multiplication}
\label{fig:modularmult}
\end{figure}

We observe several interesting properties. First of all, the gains for 
Newton precision and jagged precision always agree though one may have 
thought at first that Newton precision is weaker. Since performing
precision computations in the Newton framework is cheaper, it seems
(at least on this example) that using the jagged precision model is not
relevant.

On the other hand, the lattice precision may end up with better results. 
Nevertheless this strongly depends on the modulus $M$. For instance, 
when $M$ is irreducible modulo $p=2$ or Eiseistein, there is apparently 
no benefit to using the lattice precision model. We emphasize that 
these two particular cases correspond to modulus that are usually used 
to define (unramified and totally ramified respectively) extensions of 
$\Q_2$.

For other moduli, the situation is quite different and the benefit of 
using the lattice precision model becomes more apparent. 
The comparison between the gain of precision in the jagged model and 
the number of not diffused digits in the lattice model makes sense:
indeed the latter appears as a theoretical upper bound of the former
and the difference between them quantifies the quality of the way we
track precision in the jagged (or the Newton) precision model. We 
observe that this difference is usually not negligible (\emph{cf} 
notably the case of $M(X) = (X+1)^5 + 2$) meaning that this quality 
is not very good in general.
As for diffused digits, they correspond to digits that cannot be 
``seen'' in the jagged precision model. Their number then measures the 
intrinsic limitations of this model. We observe that it can be very 
important as well in several cases.

The modulus $(X+1)^5 + 2$ shows the advantage of working
with lattice precision in intermediate computations.  Indeed,
the precision behavior using the lattice model closely parallels
that of $X^5+2$, since the lattices are related by a change of
variables.  But this structure is not detected in the Newton
or jagged models.

\section{Slope factorization} \label{sec:slope_fac}

A well-known theorem \cite[Theorem~6.1]{dwork-geratto-sullivan:Gfunctions} asserts that each 
extremal point $M$ in the Newton polygon $\NP(P)$ of a polynomial $P \in 
K[X]$ corresponds to a factorization $P = AB$ where the Newton polygon 
of $A$ (resp. $B$) is given by the part of $\NP(P)$ located at the left 
(resp. the right) of $M$. Such a factorization is often called a 
\emph{slope factorization}.

The aim of this section is to design efficient and stable algorithms 
for computing these factorizations. Precisely the algorithm we obtain 
has a quasi-optimal complexity (compared to the size of the input 
polynomial) and outputs a result whose precision is (close to be) 
optimal. Two of its important additional features are simplicity and 
flexibility.

\subsection{A Newton iteration}
\label{ssec:Newtoniter}

The factor $A$ defined above is usually obtained \emph{via} a Newton 
iteration after having prepared our polynomial by flattening the first 
slope using a change of variables involving possibly rational 
exponents. We introduce here a variant of this iteration which does not 
require the flattening step and is entirely defined over $K[X]$.

\begin{theo} \label{theo:slope-factor}
Let $P(X) = \sum_{i=0}^n a_i X^i$ be a polynomial of degree $n$ with
coefficients in $K$. We assume that $\NP(P)$ has an extremal point 
whose abscissa is $d$.
We define the sequences $(A_i)_{i \geq 0}$ and $(V_i)_{i \geq 0}$
recursively by:
\begin{align*}
A_0 & = \sum_{i=0}^d a_i X^i, \quad V_{0} = 1 \medskip \\
A_{i+1} &= A_i + (V_i P \:\mod\: A_i), \smallskip \\
V_{i+1} &= (2 V_i -V_i^2 B_{i+1} ) \mod A_{i+1} \\
& \hspace{2cm}\text{where } B_{i+1} = P \div A_{i+1}.
\end{align*}
Then the sequence $(A_i)$ converges to a divisor $A_\infty$ of $P$ 
of degree $d$ whose leading coefficient is $a_d$ and whose Newton 
function agrees with $\NF(P)$ on $[0,d]$.
Moreover, setting:
$$\kappa = \NF(P)(d{+}1) + \NF(P)(d{-}1) - 2 \cdot \NF(P)(d)$$
(with $\NF(P)(-1) = \NF(P)(n{+}1) = +\infty$ if necessary),
we have $\kappa > 0$ and the following rate of convergence:
\begin{equation}
\label{eq:rateconv}
\forall i \geq 0, \quad
\NF(A_\infty - A_i) \geq \NF(P)_{|[0,d{-}1]} \,+\, 2^i \kappa.
\end{equation}
\end{theo}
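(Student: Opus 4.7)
The plan is to first verify $\kappa > 0$ as a direct consequence of extremality, then to establish the convergence bound \eqref{eq:rateconv} by induction on $i$, and finally pass to the limit to produce $A_\infty$ and verify that it divides $P$ with the prescribed Newton function.

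Positivity of $\kappa$ is immediate: extremality of the point $(d, \NF(P)(d))$ in $\NP(P)$ means that the slope of $\NP(P)$ strictly increases at abscissa $d$. Setting $\mu_k = \NF(P)(k)$, this is exactly $\kappa = (\mu_{d+1} - \mu_d) - (\mu_d - \mu_{d-1}) > 0$. The corner cases $d = 0$ and $d = n$ are handled by the convention $\mu_{-1} = \mu_{n+1} = +\infty$; in those cases $A_0$ is already a divisor of $P$ with the prescribed Newton function and the iteration is stationary from the start.

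For the induction I would maintain three simultaneous invariants at step $i$: (a) $A_i$ has degree $d$ with leading coefficient $a_d$, and $\NF(A_i) = \NF(P)|_{[0,d]}$; (b) $\NF(A_{i+1} - A_i) \geq \NF(P)|_{[0,d-1]} + 2^i \kappa$; and (c) a companion bound expressing that $V_i$ is an increasingly accurate inverse of $B_{i+1}$ in $K[X]/A_{i+1}$, with the Newton function of $(V_i B_{i+1} - 1) \mod A_{i+1}$ also growing by $2^i \kappa$ per step. Invariant (a) is preserved because $V_i P \mod A_i$ has degree strictly less than $d$, so $A_{i+1}$ keeps leading term $a_d X^d$; combining (b) with $\kappa > 0$ then forces the correction to sit strictly above $\NP(P)|_{[0,d]}$, preserving the Newton function. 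The core of the induction is propagating (b) and (c). Working in $K[X]/A_{i+1}$ and using $P \equiv R_{i+1} \pmod{A_{i+1}}$ (with $R_{i+1} = P \mod A_{i+1}$) together with the definition of $V_{i+1}$, one obtains the identity
\[
V_{i+1} P \equiv V_i R_{i+1} (2 - V_i B_{i+1}) \quad \text{modulo } A_{i+1},
\]
which is the classical Newton-iteration identity $2u - u^2 = 1 - (u-1)^2$ applied to $u = V_i B_{i+1}$. Bounding each of the three factors on the right by Theorem~\ref{theo:EDivisionNP} and the inductive hypotheses should then promote the exponent from $2^i \kappa$ to $2^{i+1} \kappa$.

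The main obstacle is precisely the Newton-function bookkeeping in this propagation: several estimates of the form $\NF(\cdot) \geq \psi + c$ must be combined under products and Euclidean divisions, and one must check that the common slope $\lambda_- = \mu_d - \mu_{d-1}$ appearing in $\NF(A_i)$ and $\NF(P)|_{[0,d]}$ lines up with the slopes governing the remainder and the inverse so that the gap $\kappa$ between the slopes of $\NP(P)$ on either side of $d$ contributes exactly the $2^i$ factor. Once (b) is established for all $i$, the partial sums $\sum_{j \leq i}(A_{j+1} - A_j)$ are Cauchy coefficientwise by completeness of $K$, producing a limit $A_\infty \in K_{\leq d}[X]$ of degree $d$ and leading coefficient $a_d$; summing the tail yields \eqref{eq:rateconv}, and (a) passed to the limit gives $\NF(A_\infty) = \NF(P)|_{[0,d]}$. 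Finally, $A_\infty \mid P$ follows from $V_i P \mod A_i = A_{i+1} - A_i \to 0$: the limiting version $V_\infty B_\infty \equiv 1 \pmod{A_\infty}$ of (c) makes $V_\infty$ invertible in $K[X]/A_\infty$, so $A_\infty \mid P$ as required.
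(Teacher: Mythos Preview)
Your overall architecture --- maintain simultaneous invariants on $\NF(A_i)$, on the correction $A_{i+1}-A_i$, and on the quality of $V_i$ as an approximate inverse, then pass to the limit --- matches the paper's. But the specific step you single out as ``the core of the induction'' does not work as written.

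You propose to use
\[
V_{i+1}P \;\equiv\; V_i\,R_{i+1}\,(2 - V_iB_{i+1}) \pmod{A_{i+1}},
\qquad R_{i+1}=P\bmod A_{i+1},
\]
and claim that bounding ``each of the three factors'' promotes $2^i\kappa$ to $2^{i+1}\kappa$. It does not: the factor $2 - V_iB_{i+1} = 1 + (1 - V_iB_{i+1})$ is close to $1$ (its $b_0$-value is $0$, not $2^i\kappa$), and $V_i$ contributes nothing positive either (you need, and never state, a bound like $b_1(V_i)\ge 0$ just to ensure it does no harm). So the product inherits only the single $2^i\kappa$ coming from $R_{i+1}$ --- and even that bound on $R_{i+1}=P\bmod A_{i+1}$ is not among your invariants and requires its own argument.

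The paper gets the doubling from a different decomposition. Writing $R_i = V_iP\bmod A_i$, $S_i = P\bmod A_i$, $Q_i = V_iP\div A_i$, $T_i=(1-V_iB_i)\bmod A_i$ and $\Delta V_i=V_{i+1}-V_i$, one proves
\[
R_{i+1} \;=\; \bigl(\Delta V_i\cdot S_{i+1} \;+\; (1-Q_i)\,R_i\bigr)\bmod A_{i+1},
\]
and separately that each of $\Delta V_i$, $S_{i+1}$, $1-Q_i$, $R_i$ already sits at level $2^i\kappa$ (in the appropriate $b_\varphi$ or $b_0$ sense). Both summands are thus products of \emph{two} factors of order $2^i\kappa$, giving $2^{i+1}\kappa$. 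Establishing those four auxiliary bounds in turn requires several further identities (for $\Delta B_i$, $\Delta S_i$, $S_i$, $1-Q_i$, and the squaring identity $T_{i+1}\equiv(T_i+V_i\Delta B_i)^2$), together with a small calculus of the functionals $b_\varphi,b_0,b_1$ under sums, products, and Euclidean division. Your sketch acknowledges that ``Newton-function bookkeeping'' is the obstacle, but the missing ingredient is not bookkeeping: it is the identity above, which replaces your three-factor product (only one factor small) by a sum of two-factor products (both factors small).

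A minor point: your divisibility argument at the end is fine, but note it relies on (c) in the limit to make $V_\infty$ a unit in $K[X]/A_\infty$; the paper sidesteps this by showing directly that $S_i=P\bmod A_i\to 0$.
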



%

\begin{rem}
\label{rem:unicity}
The divisor $A$ is uniquely determined by the conditions of 
Theorem~\ref{theo:slope-factor}. Indeed, consider two divisors
$A$ and $A'$ of $P$ such that $\NF(A) = \NF(A') = \NF(P)_{|[0,d]}$.
Then $L = \lcm(A,A')$ is a divisor of $P$ as well and the 
slopes of its Newton polygon are all at most $\lambda_0 = \NF(P)(d) - 
\NF(P)(d{-}1)$. Therefore $\deg L = d$ and $L$ differs from $A$ and
$A'$ by a multiplicative nonzero constant. Then, if $A$ and $A'$ 
share in addition the same leading coefficient, they must coincide.
\end{rem}

The rest of this subsection is devoted to the proof of the theorem.
If $d = n$ (resp. $d = 0$), the sequence $A_i$ is constant equal to
$P$ (resp. to the constant coefficient of $P$) and theorem is clear.
We then assume $0 < d < n$. We set:
\begin{align*}
\lambda_0 & = \NF(P)(d) - \NF(P)(d{-}1) \\
\lambda_1 & = \NF(P)(d{+}1) - \NF(P)(d),
\end{align*}
so that $\kappa = \lambda_1 - \lambda_0$.
The existence of an extremal point of $\NP(P)$ located at abscissa
$d$ ensures that $\lambda_1 > \lambda_0$, \emph{i.e.} $\kappa > 0$.
For all indices $i$, we define:
$$\begin{array}{r@{\,\,}lr@{\,\,}l}
Q_i &= V_i P \div A_i, &
R_i & = V_i P \mod A_i = A_{i+1} - A_i, 
\smallskip \\
S_i &= P \mod A_i, &
T_i &= (1 - V_i B_i) \mod A_i
\end{array}$$
and when $\square$ is some letter, we put $\Delta \square_i = \square_{i+1} 
- \square_i$.

\begin{lem} \label{lem:formulae}
The following relations hold:
\begin{align}
\Delta B_i &= -(R_i B_{i+1}) \div A_i, \label{eqdef:Biminus} \\
\Delta S_i &= -(R_i B_{i+1}) \mod A_i,  \label{eqdef:Siminus} \\
S_{i} &= (B_i R_i + T_i S_{i-1} + T_i \: \Delta S_{i-1}) \mod A_{i}, \label{eqdef:Si2}  \\
\Delta V_i &= (V_i T_i - V_i^2 \: \Delta B_i) \mod A_{i}, \label{eqdef:Viminus}\\
1-Q_i &= T_i - (V_i S_i) \div A_i, \label{eqdef:Qiminus}\\
R_{i+1} &= (\Delta V_i \: S_{i+1}+(1{-}Q_i)R_i) \mod A_{i+1}, \label{eq:Riplus}\\
T_{i+1} &= (T_i + V_i \: \Delta B_i)^2 \mod{A_{i+1}}. \label{eq:ViBiplus}
\end{align}
\end{lem}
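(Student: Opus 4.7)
My plan is to verify each of the seven identities by direct algebraic manipulation, starting from the defining relations $P = A_i B_i + S_i$, $V_i P = A_i Q_i + R_i$, $A_{i+1} = A_i + R_i$, and $V_{i+1} \equiv 2 V_i - V_i^2 B_{i+1} \pmod{A_{i+1}}$, together with the congruence $T_i \equiv 1 - V_i B_i \pmod{A_i}$ and the uniform degree bounds $\deg A_i = d$ while $V_i, S_i, R_i, T_i$ all have degree $< d$. In each case the strategy is the same: rewrite the claimed right-hand side, reduce modulo $A_i$ or $A_{i+1}$ as appropriate, and then appeal to the degree bound to promote a congruence to a literal equality.

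Identities~(\ref{eqdef:Biminus}) and~(\ref{eqdef:Siminus}) come out simultaneously: subtracting $P = B_i A_i + S_i$ from $P = B_{i+1} A_{i+1} + S_{i+1}$ and substituting $A_{i+1} = A_i + R_i$ gives $B_{i+1} R_i = -\Delta B_i \cdot A_i - \Delta S_i$, and since $\deg \Delta S_i < d$ uniqueness of Euclidean division by $A_i$ identifies both the quotient and the remainder. For~(\ref{eqdef:Si2}) I would collapse $T_i S_{i-1} + T_i \Delta S_{i-1} = T_i S_i$ and, modulo $A_i$, substitute $1 - T_i \equiv V_i B_i$; it then suffices to show $V_i S_i \equiv R_i \pmod{A_i}$, which is immediate from comparing $V_i P = V_i B_i A_i + V_i S_i$ with $V_i P = Q_i A_i + R_i$. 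Identity~(\ref{eqdef:Qiminus}) comes from this same comparison: dividing $V_i P = V_i B_i A_i + V_i S_i$ by $A_i$ yields $Q_i = V_i B_i + (V_i S_i) \div A_i$, and substituting $T_i$ for $1 - V_i B_i$ gives the stated expression for $1 - Q_i$.

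For~(\ref{eqdef:Viminus}) and~(\ref{eq:ViBiplus}) the crucial ingredient is the classical Newton identity $1 - (2V - V^2 B) B = (1 - V B)^2$, which yields $1 - V_{i+1} B_{i+1} \equiv (1 - V_i B_{i+1})^2 \pmod{A_{i+1}}$. Expanding $1 - V_i B_{i+1} = (1 - V_i B_i) - V_i \Delta B_i$ and replacing $1 - V_i B_i$ by $T_i$, squaring yields~(\ref{eq:ViBiplus}), while the linear analogue $V_{i+1} - V_i \equiv V_i(1 - V_i B_{i+1}) \pmod{A_{i+1}}$ yields~(\ref{eqdef:Viminus}). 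For~(\ref{eq:Riplus}) I would expand $R_{i+1} \equiv V_{i+1} P \equiv (V_i + \Delta V_i) P \pmod{A_{i+1}}$ and reduce $A_i \equiv -R_i \pmod{A_{i+1}}$; the simplification then hinges on the identity $S_i - B_i R_i - S_{i+1} = \Delta B_i \cdot A_{i+1}$, which combines~(\ref{eqdef:Biminus}) and~(\ref{eqdef:Siminus}) and lets one replace $\Delta V_i(S_i - B_i R_i)$ by $\Delta V_i S_{i+1}$ modulo $A_{i+1}$.

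The main obstacle is careful bookkeeping of modular reductions, since several identities mix reductions modulo $A_i$ and modulo $A_{i+1}$ --- two polynomials of the same degree $d$ that differ by the nonzero remainder $R_i$. Whenever one transfers a congruence between the two reductions a correction proportional to $R_i$ appears, and one must exploit the degree bound $< d$ on both sides, and consistently distinguish the unreduced expressions $1 - V_i B_i$ and $V_i B_i A_i$ from their reductions modulo $A_i$, to upgrade the resulting congruences to literal polynomial equalities.
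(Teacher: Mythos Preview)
Your proposal is correct and follows essentially the same route as the paper: identities~(\ref{eqdef:Biminus})--(\ref{eqdef:Siminus}) from subtracting the two Euclidean divisions of $P$, identities~(\ref{eqdef:Si2}) and~(\ref{eqdef:Qiminus}) from the key relation $(V_iB_i-Q_i)A_i = R_i - V_iS_i$, and identities~(\ref{eqdef:Viminus}) and~(\ref{eq:ViBiplus}) from the Newton-inversion square $(1-V_iB_{i+1})^2$. The only minor deviation is in~(\ref{eq:Riplus}), where the paper substitutes $A_i = A_{i+1}-R_i$ directly into $V_iP = A_iQ_i+R_i$ and then uses $P\equiv S_{i+1}\pmod{A_{i+1}}$, avoiding your detour through the auxiliary identity $S_i - B_iR_i - S_{i+1} = \Delta B_i\cdot A_{i+1}$; both arguments arrive at the same place.
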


\begin{proof}
From $P= A_i B_i + S_i= A_{i+1} B_{i+1} + S_{i+1}$, we get
$- R_i B_{i+1}= \Delta B_i \cdot A_i + \Delta S_i$.
Hence, by consideration of degree, we obtain \eqref{eqdef:Biminus} 
and \eqref{eqdef:Siminus}.
On the other hand, from $V_i P = A_i Q_i + R_i=V_i (A_i B_i + S_i)$, we 
derive
\begin{equation}
\label{eq:ViBiQi} 
(V_i B_i -Q_i) \cdot A_i=R_i-V_i S_i. 
\end{equation} 
Thus $R_i=V_i S_i \mod A_i$.
Hence $B_i R_i = (S_i - S_i T_i) \mod A_i$ and $S_i = B_i R_i + S_i T_i 
\mod A_i$, from which \eqref{eqdef:Si2} follows directly 
By definition of $V_i$, we get $\Delta V_i = V_i (1-V_i B_{i+1}) \mod A_{i+1}$ and consequently \eqref{eqdef:Viminus}.
We now write $1-Q_i= T_i + (V_i B_i - Q_i)$. Using \eqref{eq:ViBiQi} and
noting that $\deg R_i < \deg A_i = d$, we 
get \eqref{eqdef:Qiminus}.

We have $V_i P=A_i Q_i+R_i=(A_{i+1}-R_i)Q_i+R_i$ and $V_{i+1} P=A_{i+1} 
Q_{i+1}+R_{i+1}$. Thus:
\begin{align*}
R_{i+1} &= \Delta V_i \: P + (1-Q_i)R_i \\
 &= (\Delta V_i \: S_{i+1} + (1-Q_i)R_i) \mod A_{i+1},
\end{align*} 
and \eqref{eq:Riplus} is proved. Finally
\begin{align*}
T_{i+1} &\equiv 1-2V_i B_{i+1}+V_i^2 B_{i+1}^2 \pmod {A_{i+1}} \\
&\equiv (1-V_i B_{i+1})^2 \pmod {A_{i+1}} \\
&\equiv (T_i + V_i \: \Delta B_i)^2 \pmod {A_{i+1}}
\end{align*}
which concludes the proof.
\end{proof}

If $\lambda_0 = -\infty$ or $\lambda_1 = +\infty$, the sequence $(A_i)$
is constant and the theorem is obvious. We then assume that $\lambda_0$
and $\lambda_1$ are both finite.

We define the function $\varphi : \R^+ \to \R 
\cup \{+\infty\}$ by:
\begin{equation} \label{eqdef:functionphi}
\varphi(x) = \begin{cases} \NF(P)(x) &\mbox{ if $x \leq d$} \\
\lambda_0 (x - d) + \NF(P)(d) &\mbox{ if $x > d$} \end{cases}
\end{equation}
We notice that, when the polynomial $P$ is changed into $cP$ where $c$ is a 
nonzero constant lying in a finite extension of $K$, the $A_i$'s are all 
multiplied by $c$ as well whereas the $B_i$'s and the $V_i$'s remained 
unchanged. Therefore, the theorem holds for $P$ if and only if it holds 
for $cP$. As a consequence we may assume that $P$ is normalized so that 
$\NF(P)(d) = d \lambda_0$, \emph{i.e.} $\varphi(x) = \lambda_0 x$ for $x > d$.
For a polynomial $Q \in K[X]$ of degree $n$, we further define:
\begin{align}
b_\varphi(Q) & = \min_{x \in [0,n]} \NF(Q)(x) - \varphi(x) 
\label{eqdef:bphi} \\
\text{and} \quad
b_i(Q) & = \min_{x \in [0,n]} \NF(Q)(x) - \lambda_i x
\quad \text{for } i \in \{0,1\}.
\label{eqdef:b0}
\end{align}
Set also $b_\varphi(0) = b_0(0) = b_1(0) = +\infty$ by convention.
With the normalization of $P$ we chose above, we have $b_0(P)
= b_\varphi(P) = 0$ and $b_\varphi(Q) \leq b_0(Q)$ for all polynomial
$Q$. Similarly $b_1(Q) \leq b_0(Q)$ for all $Q$.

\begin{lem}
\label{lem:bphib0}
Let $b \in \{b_\varphi, b_0, b_1\}$. For $Q_1, Q_2 \in K[X]$:

\smallskip

a) $b(Q_1+Q_2) \geq \min \big(b(Q_1), b(Q_2)\big)$

\smallskip

b) $b(Q_1Q_2) \geq \min b(Q_1) + b(Q_2)$ 

\smallskip

c) $b_\varphi(Q_1 Q_2) \geq b_\varphi(Q_1) + b_0(Q_2)$

\smallskip

\noindent
For $Q, A \in K[X]$ with $\deg A = d$ and $\NF(A) =
\NF(P)_{|[0,d]}$:

\smallskip

d) $b(Q \mod A) \geq b(Q)$

\smallskip

e) $b_0(Q \div A) \geq b_\varphi(Q)$.
\end{lem}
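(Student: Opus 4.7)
The plan is to dispatch the five assertions in turn, relying on the Newton-function arithmetic of Section~\ref{sec:prec_data} and on Theorem~\ref{theo:EDivisionNP}.

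For (a), start from $\NF(Q_1+Q_2) \geq \NF(Q_1) \nfplus \NF(Q_2)$: a value of the convex-hull function on the right is a convex combination of values of $\NF(Q_1)$ and $\NF(Q_2)$, and convexity of the reference $f \in \{\varphi, \lambda_0 x, \lambda_1 x\}$ then gives $b(Q_1+Q_2) \geq \min(b(Q_1), b(Q_2))$. For (b) in the two linear cases, the equality $\NF(Q_1 Q_2) = \NF(Q_1) \nftimes \NF(Q_2)$ is an infimal convolution which separates cleanly against $\lambda_i x$, giving $b_i(Q_1 Q_2) = b_i(Q_1) + b_i(Q_2)$. For (c), the key observation is that $\varphi$ has slope at most $\lambda_0$ on all of $[0, \infty)$ --- on $[0, d]$ by convexity of $\NF(P)$ with final slope $\lambda_0$, and beyond by construction --- which yields the sub-linearity $\varphi(x_1+x_2) \leq \varphi(x_1) + \lambda_0 x_2$ for $x_1, x_2 \geq 0$; inserting this in the infimal convolution gives (c), and the $b_\varphi$ case of (b) follows \emph{a fortiori} via $b_\varphi(Q_2) \leq b_0(Q_2)$.

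For (d), set $R = Q \mod A$ and invoke Theorem~\ref{theo:EDivisionNP}: $\NF(R) \geq \NF(Q) \nfmod \NF(A)$. Unpacking the definition of $\nfmod$, for $x \in [0, d{-}1]$ one has
\[
\NF(R)(x) \;\geq\; \min\bigl(\NF(Q)(x),\ \varphi(x) + \delta\bigr),
\qquad
\delta := \min_{y \in [d, \deg Q]} \bigl(\NF(Q)(y) - \lambda_0 y\bigr).
\]
Since $\varphi(y) = \lambda_0 y$ for $y \geq d$, restricting the minima defining $b_\varphi(Q)$ and $b_0(Q)$ gives $\delta \geq b_\varphi(Q)$ and $\delta \geq b_0(Q)$; moreover $\delta \geq b_1(Q) + \kappa d$, since $\NF(Q)(y) - \lambda_0 y \geq \NF(Q)(y) - \lambda_1 y + \kappa d$ for $y \geq d$. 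Combined with $\varphi(x) \geq \lambda_0 x$ on $[0, d]$, this yields $b(R) \geq b(Q)$ immediately for $b \in \{b_\varphi, b_0\}$. For $b_1$, the function $\varphi(x) - \lambda_1 x$ on $[0, d{-}1]$ is minimized at $x = d{-}1$ with value $-\kappa(d{-}1)$, and the excess $\kappa d$ in the bound on $\delta$ more than compensates because $\kappa > 0$.

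The main obstacle is (e): a direct appeal to Theorem~\ref{theo:EDivisionNP} via $\NF(Q\div A) \geq \NF(Q) \nfdiv \NF(A)$ only produces $b_0(Q \div A) \geq \lambda_0 d + b_\varphi(Q)$, which is weaker than the claim when $\lambda_0 d$ is negative. I would instead argue at the coefficient level. Set $B = Q \div A$ and $m = \deg B$. Since $R = Q \mod A$ has degree less than $d$, the coefficients of $AB$ and $Q$ agree in every degree $\geq d$, giving the recursion
\[
B_i \;=\; \frac{1}{A_d}\,\Bigl( Q_{d+i} \;-\; \sum_{s=0}^{d-1} A_s\, B_{d+i-s}\Bigr)
\]
for $0 \leq i \leq m$ (with $B_j := 0$ for $j > m$). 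Using the normalization $\val(A_d) = \lambda_0 d$, the bounds $\val(A_s) \geq \varphi(s) \geq \lambda_0 s$, and $\val(Q_{d+i}) \geq \varphi(d+i) + b_\varphi(Q) = \lambda_0(d+i) + b_\varphi(Q)$, a descending induction on $i$ shows $\val(B_i) \geq \lambda_0 i + b_\varphi(Q)$ for every $i$. Since each point $(i, \val(B_i))$ thus lies above the line $y = \lambda_0 x + b_\varphi(Q)$, so does the whole Newton polygon of $B$, whence $b_0(B) \geq b_\varphi(Q)$.
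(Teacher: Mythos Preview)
Your argument is correct. For (a)--(c) it is essentially the paper's proof unpacked: your sub-linearity inequality $\varphi(x_1+x_2) \le \varphi(x_1) + \lambda_0 x_2$ is precisely the analytic form of the paper's geometric statement $\Epi(\varphi) + C = \Epi(\varphi)$, where $C$ is the cone spanned by $(0,1)$ and $(1,\lambda_0)$. One cosmetic remark on (d): the displayed bound $\NF(R)(x) \ge \min\bigl(\NF(Q)(x),\,\varphi(x)+\delta\bigr)$ is not literally what Theorem~\ref{theo:EDivisionNP} delivers, since $\nfplus$ can dip strictly below the pointwise minimum. This does not affect your conclusion, because you only ever compare against a convex reference ($\varphi$, $\lambda_0 x$ or $\lambda_1 x$, shifted by a constant), and once both branches dominate that reference, so does their $\nfplus$.

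The genuine divergence from the paper is (e). The paper says (d) and (e) ``follow from Theorem~\ref{theo:EDivisionNP}'', but you correctly observe that plugging into the printed formula for $\nfdiv$ yields only $b_0(Q\div A) \ge b_\varphi(Q) + \lambda_0 d$. In fact the stated formula for $\nfdiv$ is missing a shift by $-\psi(d)$: already at the leading coefficient one has $\val\bigl((Q\div A)_{\deg Q - d}\bigr) = \val(Q_{\deg Q}) - \val(A_d)$, whereas the printed formula gives $(\NF(Q)\nfdiv\NF(A))(\deg Q - d) = \NF(Q)(\deg Q)$ with no subtraction of $\NF(A)(d)$. With the intended definition, (e) is immediate from Theorem~\ref{theo:EDivisionNP}. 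Your descending induction on the coefficients of $Q\div A$ is a clean, self-contained alternative that bypasses this normalization issue entirely and is arguably the more robust route.
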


\begin{proof}
\emph{a)} is clear.

We skip the proof of \emph{b)} which is similar to that of \emph{c)}.

Let $t_1$ (resp. $t_2$) be the translation of vector $(0, b_\varphi
(Q_1))$ (resp. $(0, b_0(Q_2))$. It follows from the definition of
$b_\varphi$ that $\NP(Q_1)$ is a subset of $t_1(\Epi(\varphi))$ where
$\Epi(\varphi)$ denotes the epigraph of $\varphi$. Similarly $\NP(Q_2)
\subset t_2(C)$ where $C$ is the convex cone generated by the vectors 
starting from $(0,0)$ to $(0,1)$ and $(1, \lambda_0)$. Thus
$$\NP(Q_1 Q_2) \subset t_2 \circ t_1 \big(\Epi(\varphi) + C\big) =
t_2 \circ t_1\big(\Epi(\varphi)\big)$$
and \emph{c)} follows.

Finally \emph{d)} and \emph{e)} follows from
Theorem~\ref{theo:EDivisionNP}.
\end{proof}

We are now going to prove by induction on $i$ the conjonction of all
equalities and inequalities below:
\begin{equation}
\label{eq:induction}
\begin{array}{l}
\NF(A_i) = \varphi_{|[0,d]}, \smallskip \\
b_1(V_i) \geq 0, \quad
b_\varphi(R_i) \geq 2^i \kappa \smallskip \\
b_\varphi(S_i) \geq 0, \quad
b_0(T_i) \geq 2^i \kappa.
\end{array}
\end{equation}

Noting that $A_0$ and $P$ agree up to degree $d$ and that $\NP(P)$ has 
an extremal point at abscissa $d$, we get $\NF(A_0) = \varphi_{|[0,d]}$.
Clearly $b_1(V_0) \geq 0$ since $V_0 = 1$. It follows from the 
definitions that $R_0 = S_0 = P \mod A_0 = (P{-}A_0) \mod A_0$. 
We remark that $P{-}A_0 = \sum_{i=d+1}^n a_i X^i.$
Using Theorem \ref{theo:EDivisionNP}, we obtain that
$b_\varphi(P{-}A_0) \geq \kappa$ and then $b_\varphi(R_0) = b_\varphi(S_0)
\geq \kappa$ (see Figure~\ref{fig:boundR0S0}).
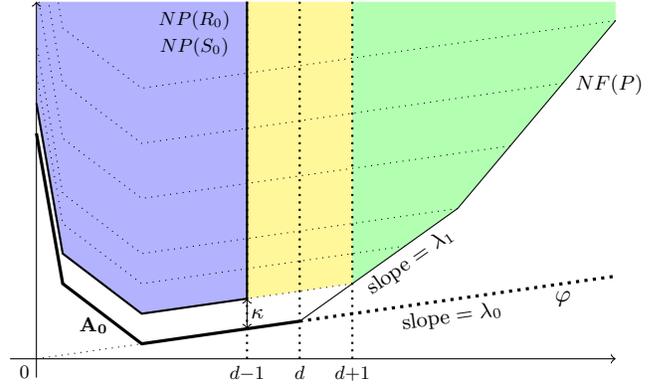
\begin{figure}
\hfill
\begin{tikzpicture}[xscale=0.7, yscale=0.5]
\fill[blue!30] (4,9.5)--(4,1.6)--(2,1.2)--(0.5,2.8)--(0,6.8)--(0,9.5)--cycle;
\fill[yellow!50] (6,9.5)--(6,2)--(4,1.6)--(4,9.5)--cycle;
\fill[green!30] (6,9.5)--(6,2)--(8,4)--(11,9)--(11,9.5)--cycle;
\draw[->] (-0.5,0)--(11,0);
\draw[->] (0,-0.5)--(0,9.5);
\draw[very thick] (0,6)--(0.5,2)--(2,.4)--(5,1);
\draw(5,1)--(8,4)--(11,9);
\draw[thick] (4,9.5)-- (4,1.6)--(2,1.2)--(0.5,2.8)--(0,6.8);
\draw[dotted] (6,2)--(4,1.6);
\draw[dotted] (7,3)--(2,2)--(0.5,3.6)--(0,7.6);
\draw[dotted] (8,4)--(2,2.8)--(0.5,4.4)--(0,8.4);
\draw[dotted] (9,5.667)--(2,4.267)--(0.5,5.867)--(0.046,9.5); 
\draw[dotted] (10,7.333)--(2,5.733)--(0.5,7.333)--(0.229,9.5); 
\draw[dotted] (11,9)--(2,7.2)--(0.5,8.8)-- (.413,9.5);
\draw[very thick,dotted] (5,1)--(11,2.2);
\draw[dotted] (0,0)-- (5,1);

\draw[<->] (4,.8)--(4,1.6);
\draw[dotted,thick] (5,0)--(5,9.5);
\draw[dotted,thick] (4,0)--(4,9.5);
\draw[dotted,thick] (6,0)--(6,9.5);
\node[scale=0.8, below left] at (0,0) { $0$ };
\node[scale=0.8, below] at (5,0) { $d$ };
\node[scale=0.8, below] at (4,0) { $d{-}1$ };
\node[scale=0.8, below] at (6,0) { $d{+}1$ };
\node[scale=0.85, below left] at (1.5,1.3) { $\mathbf{A_0}$ };
\node[scale=0.85, right] at (10.1,7.3) { $NF(P)$ };
\node[scale=0.9] at (4.2,1.2) { $\kappa$ };
\node[scale=0.8] at (3,9) { $NP(R_0)$ };
\node[scale=0.8] at (3,8.3) { $NP(S_0)$ };
\node[right,scale=0.9,rotate=35] at (6.2,1.7) { $\text{slope} = \lambda_1$ };
\node[right,scale=0.9,rotate=8] at (6.8,.9) { $\text{slope} = \lambda_0$ };
\node[thick] at (10,1.6) { $\varphi$ };
\end{tikzpicture}
\hfill\null

\vspace{-5mm}

\caption{Bound on $\NF(R_0)$ and $\NF(S_0)$}
\label{fig:boundR0S0}
\end{figure}
Finally observe that:
$$T_0 = (1 - B_0) \mod A_0 = \big((A_0 - P) \div A_0\big) \mod A_0.$$
Therefore $b_0(T_0) \geq \kappa$ results from $b_\varphi(A_0{-}P) \geq 
\kappa$ thanks to Lemma~\ref{lem:bphib0}. 
We have then established~\eqref{eq:induction} when $i = 0$.

We now assume~\eqref{eq:induction} for the index $i$.
From $A_{i+1} = A_i + R_i$ and the estimation $b_\varphi(R_i) \geq 2^i 
\kappa > 0$, we derive $\NF(A_{i+1}) = \varphi_{|[0,d]}$. Therefore,
Lemma~\ref{lem:bphib0} applies with $A = A_i$ and $A = A_{i+1}$.
Now coming back to the the definition of $B_{i+1}$ and using
Theorem~\ref{theo:EDivisionNP}, we get $b_0(B_{i+1}) \geq b_1(B_{i+1}) 
\geq 0$. As a consequence:
$$b_\varphi(R_i B_{i+1}) \geq b_\varphi(R_i) + b_0(B_{i+1})
\geq 2^i \kappa$$
by Lemma~\ref{lem:bphib0} and the induction hypothesis.
Using again Lemma~\ref{lem:bphib0}, we then derive from 
\eqref{eqdef:Biminus} and \eqref{eqdef:Siminus} that $b_0(\Delta B_i) \geq 
2^i \kappa$ and $b_\varphi(\Delta S_i) \geq 2^i \kappa$.
Similarly, using \eqref{eqdef:Si2} and the estimations we already know,
we obtain $b_\varphi(S_i) \geq 2^i \kappa$. Combining this with
$b_\varphi(\Delta S_i) \geq 2^i \kappa$, we find
$b_\varphi(S_{i+1}) \geq 2^i \kappa$ as well. Applying again and again
the same strategy, we deduce successively
$b_0(\Delta V_i) \geq 2^i \kappa$ using~\eqref{eqdef:Viminus},
$b_0(1{-}Q_i) \geq 2^i \kappa$ using~\eqref{eqdef:Qiminus},
$b_\varphi(R_{i+1}) \geq 2^{i+1} \kappa$ using~\eqref{eq:Riplus},
and then $b_0(T_{i+1}) \geq 2^{i+1} \kappa$
using~\eqref{eq:ViBiplus}. Finally, coming back to the recurrence
defining $V_{i+1}$ and remembering that $b_1(V_i)$ and $b_1(B_{i+1})$
are both nonnegative, we find $b_1(V_{i+1}) \geq 0$.
The equalities and inequalities
of Eq.~\eqref{eq:induction} have then all been established for
the index $i+1$ and the induction goes.

From the inequalities $b_\varphi(R_i) \geq 2^i \kappa$, we deduce that 
the sequence $(A_i)$ is Cauchy and therefore converges. Its limit 
$A_\infty$ certainly satisfies $\NF(A_\infty) = \varphi_{|[0,d]}$ because 
all the $A_i$'s do. Moreover we know that $b_\varphi(S_i) \geq 2^i 
\kappa$ from what we derive that the sequence $(S_i)$ goes to $0$.
Coming back to the definition of $S_i$, we find $P \mod A_\infty = 0$,
\emph{i.e.} $A_\infty$ divides $P$. Finally, Eq.~\eqref{eq:rateconv}
giving the rate of convergence follows from the writing
$A_\infty{-}A_i = \sum_{j=i}^\infty R_j$
together with the facts that $b_\varphi(R_j) \geq 2^i \kappa$ and $\deg 
R_j \leq d{-}1$ for all $j \geq i$.

\begin{rem}
\label{rem:slope-factor}
It follows from the proof above that the sequence $(V_i)_{i \geq 0}$
converges as well. Its limit $V_\infty$ is an inverse of $B_\infty
= P \div A_\infty$ modulo $A_\infty$ and it satisfies in addition
$b_1(V_\infty) \geq 0$.

Moreover, the conclusion of 
Theorem~\ref{theo:slope-factor} is still correct if $A_0$ is any 
polynomial of degree $d$ with leading coefficient $a_d$ and $V_0$
is any polynomial as soon as they satisfy:
$$b_\varphi\big(V_0 P \mod A_0\big) > 0
\quad \text{and} \quad
b_0\big((1 - V_0 B_0) \mod A_0\big) > 0$$
except that the constant $\kappa$ giving the rate of convergence
should be now
$\kappa = \min \big( 
b_\varphi\big(V_0 P \mod A_0\big), \,
b_0\big((1 - V_0 B_0) \mod A_0\big)\big)$.
\end{rem}

\subsection{A slope factorization algorithm}

Let $P \in K_n[X]$ and $d$ be the abscissa of an extremal point of 
$\NP(P)$. Previously (\emph{cf} Theorem~\ref{theo:slope-factor}), we 
have defined a sequence $(A_i, V_i)$ converging to $(A,V)$ where $A$ is 
a factor of $P$ whose Newton function is $\NF(P)_{|[0,d]}$ and $V$ is the inverse of $B = P/A$ modulo $A$.
We now assume that $P$ is known up to some finite precision: $P = P_\app + 
O(\hspace{0.5mm}\cdots)$ where the object inside the $O$ depends on the 
chosen precision model. We address the two following questions:
(1)~what is the precision on the factor $A$, and
(2)~how can one compute in practice $A$ at this precision?

In the sequel, it will be convenient to use a different normalization on 
$A$ and $B$: if $a_d$ is the coefficient of $P$ of degree $d$, we set
$A^{(1)} = a_d^{-1} A$ and $B^{(1)} = a_d B$
so that $A^{(1)}$ is monic and $P = A^{(1)} B^{(1)}$. We shall also 
always assume that $P$ is monic in the sense that its leading
coefficient is \emph{exactly}~$1$; the precision datum on $P$ then 
only concerns the coefficients up to degree $n{-}1$. Similarly, noting
that $A^{(1)}$ and $B^{(1)}$ are monic as well, they only carry a
precision datum up to degree $d{-}1$ and $n{-}d{-}1$ respectively.

\medskip

\noindent
{\bf Newton precision.}
We assume that the precision on the input $P$ has the shape 
$O(\varphi_P)$ where $\varphi_P$ is a Newton function of degree 
$n{-}1$. From now on, we assume that the precision $O(\varphi_P)$ 
is nondegenerate in the sense of Definition~\ref{def:nondeg}. This
ensures in particular that the Newton polygon of $P$ is well defined. 
We import the notations $\varphi$, $b_\varphi$ and $b_0$ from \S 
\ref{ssec:Newtoniter} and refer to 
Eqs.~\eqref{eqdef:functionphi}--\eqref{eqdef:b0} for the definitions.

\begin{prop}
\label{prop:Newtonprecslope}
We keep all the above notations and assumptions. We set:
$$\delta = \min_{x \in [0,n{-}1]} \varphi_P(x) - \varphi(x)$$
and assume that $\delta > 0$. 
Then the factor $A^{(1)}$ is known with precision at least 
$O(\varphi_{A^{(1)}})$ with
$\varphi_{A^{(1)}} = \varphi_{|[0,d{-}1]} - \varphi(d) + \delta$.
\end{prop}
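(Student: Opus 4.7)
The plan is to reduce the claim to an application of Remark~\ref{rem:slope-factor}, using the factor of $P_\app$ (suitably rescaled) as the initialization of the iteration for $P$. Write $P = P_\app + \delta P$ for an arbitrary perturbation compatible with the precision datum, so that $\NF(\delta P) \geq \varphi_P$ and hence $b_\varphi(\delta P) \geq \delta$. Let $A_\app$ be the factor of $P_\app$ produced by Theorem~\ref{theo:slope-factor}, with leading coefficient $a_d^\app$, and let $V_\app$ be the associated limit inverse, so that $b_1(V_\app) \geq 0$ (by the remark) and hence $b_0(V_\app) \geq 0$ since $\lambda_0 \leq \lambda_1$. I would then run the iteration of Theorem~\ref{theo:slope-factor} on $P$, initialized with $A_0 := (a_d / a_d^\app)\, A_\app$ (which carries the required leading coefficient $a_d$) and $V_0 := V_\app$.

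Because $A_0$ and $A_\app$ generate the same ideal, Euclidean reduction modulo $A_0$ coincides with reduction modulo $A_\app$. Using $V_\app P_\app \equiv 0 \pmod{A_\app}$, this gives
\[ V_\app P \mod A_0 \;=\; V_\app \delta P \mod A_\app, \]
whose $b_\varphi$ is at least $b_\varphi(\delta P) + b_0(V_\app) \geq \delta$ by parts (c) and (d) of Lemma~\ref{lem:bphib0}. For the second hypothesis of Remark~\ref{rem:slope-factor}, the identity $B_0 = P \div A_0 = (a_d^\app/a_d)(B_\app + \delta P \div A_\app)$ combined with $V_\app B_\app \equiv 1 \pmod{A_\app}$ yields
\[ 1 - V_\app B_0 \;\equiv\; \frac{a_d - a_d^\app}{a_d} \;-\; \frac{a_d^\app}{a_d}\, V_\app(\delta P \div A_\app) \pmod{A_\app}. \]
The first summand has valuation at least $\delta$ by the nondegeneracy of $\varphi_P$ at the extremal point of abscissa $d$; the $b_0$ of the second is at least $b_0(V_\app) + b_\varphi(\delta P) \geq \delta$ by parts (b) and (e) of Lemma~\ref{lem:bphib0}. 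Remark~\ref{rem:slope-factor} therefore applies with $\kappa \geq \delta$.

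The convergence bounds $b_\varphi(R_j) \geq 2^j \kappa$ together with $A_\infty^P - A_0 = \sum_{j \geq 0} R_j$ then give $b_\varphi(A_\infty^P - A_0) \geq \delta$, where $A_\infty^P$ is the factor of $P$ produced by the iteration. By construction $A_0 / a_d = A_\app / a_d^\app = A^{(1)}_{P_\app}$, so $A^{(1)}_P - A^{(1)}_{P_\app} = (A_\infty^P - A_0)/a_d$; since $\val(a_d) = \varphi(d)$, this yields $\NF(A^{(1)}_P - A^{(1)}_{P_\app})(x) \geq \varphi(x) - \varphi(d) + \delta = \varphi_{A^{(1)}}(x)$ on $[0, d-1]$, as required. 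The main technical step is the verification of the two bounds in the middle paragraph, where one has to juggle the three functionals $b_\varphi, b_0, b_1$ and select the appropriate parts of Lemma~\ref{lem:bphib0}; the rescaling of $A_\app$ by $a_d/a_d^\app$ does the bookkeeping both for the leading-coefficient requirement of the remark and for the final passage to the monic normalization.
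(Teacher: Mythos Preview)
Your argument is correct and follows essentially the same route as the paper: initialize the Newton iteration for $P = P_\app + \delta P$ at the limit pair $(A_\app, V_\app)$ associated with $P_\app$, invoke Remark~\ref{rem:slope-factor}, and bound the resulting $\kappa$ below by $\delta$ via Lemma~\ref{lem:bphib0}. The paper's proof is terser: it takes $A_0 = A_\app$ without rescaling and focuses on the bound $b_\varphi((V_\app\,\delta P)\bmod A_\app)\geq\delta$, treating the second hypothesis of the remark more implicitly. Your explicit rescaling $A_0 = (a_d/a_d^\app)A_\app$ and your separate verification of $b_0((1-V_0 B_0)\bmod A_0)\geq\delta$ make the appeal to Remark~\ref{rem:slope-factor} slightly cleaner, at the cost of the extra constant term $(a_d-a_d^\app)/a_d$ that you then have to bound; but the substance of the two proofs is the same.
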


\begin{proof}
Let $\delta_P \in K[X]$ be such that $\NP(\delta_P) \geq \varphi_P$.
Let $A_\app^{(1)}$ and $A^{(1)}$ be the monic factors of $P_\app$
and $P_\app + \delta P$ respectively whose Newton functions are
$\varphi_{|[0,d]} - \varphi(d)$.

We define the sequences $(A_i)$ and $(V_i)$ by the recurrence: 
\begin{align*} 
A_0 & = A_\app, \quad V_{0} = V_\app \medskip \\ A_{i+1} 
&= A_i + \big(V_i (P_\app + \delta_P) \:\mod\: A_i\big), \smallskip \\ 
V_{i+1} &= (2 V_i -V_i^2 B_{i+1} ) \mod A_{i+1} \\ 
&  \hspace{1cm}\text{where } B_{i+1} = (P_\app + \delta_P) \div A_{i+1}. 
\end{align*} 
where $A_\app$ and $V_\app$ are those related to $P_\app$. Note that 
$A_\app = a_d \cdot A^{(1)}_\app$ if $a_d$ denotes the coefficient of 
$X^d$ in $P_\app$. By Remark~\ref{rem:slope-factor}, we know that the 
sequence $(A_i)$ converges to $A = a_d \cdot A^{(1)}$ and furthermore:
$$\NP(A - A_\app) \geq \varphi_{|[0,d{-}1]} + 
b_\varphi\big((V_\app \cdot \delta P) \mod A_\app\big)$$ 
since $(V_\app P_\app) \mod A_\app = (1 - V_\app B_\app) \mod A_\app = 
0$. Using repeatedly Lemma~\ref{lem:bphib0}, we obtain:
$$b_\varphi\big((V_\app \cdot \delta P) \mod A_\app\big)
  \geq b_0(V_\app) + b_\varphi(\delta P) \\
  \geq b_\varphi(\delta P) \geq \delta.$$
Thus $\NP(A - A_\app) \geq \varphi_{[0,d{-}1]} + \delta$. Dividing
by $a_d$, we find
$\NP(A^{(1)} - A^{(1)}_\app) \geq \varphi_{A^{(1)}}$ and we are done.
\end{proof}

\begin{rem}
Under the hypothesis~\textbf{(H)} introduced below, a correct precision 
on $A^{(1)}$ is also $O(\psi_{A^{(1)}})$ where:
$$\psi_{A^{(1)}} = \big(\varphi_P \nftimes \big(\NF(V_\app)
- \NF(P)(d)\big)\big) \nfmod \NF(P)_{|[0,d]}.$$
This follows from Proposition~\ref{prop:precA1} using
$V^{(1)}_\app = a_d^{-1} V_\app$. It follows in addition from 
Remark~\ref{rem:slope-factor} that $\NF(V_\app)$ is bounded from below 
by $x \mapsto \lambda_1 x$. This yields the bound
$\psi_{A^{(1)}} \geq \big(\varphi_P \nftimes \psi\big) \nfmod 
\NF(P)_{|[0,d]}$
where $\psi : [0,d{-}1] \to \R$ is the affine function mapping $x$
to $\lambda_1 x - \NF(P)(d)$. 
\end{rem}

We can now move to the second question we have raised before, 
\emph{i.e.} the design of an algorithm for computing $A^{(1)}$ with the 
precision given by Proposition~\ref{prop:Newtonprecslope}. Our strategy 
consists in computing first the precision and applying then the Newton 
iteration until the expected precision is reached. Below is the
precise description of our algorithm.

\noindent\hrulefill

\noindent {\bf Algorithm} {\tt slope\_factorisation\_Newton}

\noindent{\bf Input:} a monic polynomial $P + O(\varphi_P) \in K_n[X]$,

\noindent\phantom{{\bf Input:} }a break point $d$ of $\NP(P)$

\noindent{\bf Output:} the factor $A$ described above

\smallskip\noindent 1.\ %
Compute the functions $\NF(P)$ and $\varphi$

\smallskip\noindent 2.\ %
Compute $\varphi_A = \varphi_{|[0,d-1]} - \varphi(d) + \min_{x \in [0,n]}
\varphi_P(x) - \varphi(x)$

\smallskip\noindent 3.\ %
Compute $\kappa = \NP(P)(d{+}1) + \NF(P)(d{-}1) - 2 \cdot \NF(P)(d)$

\smallskip\noindent 4.\ %
Set $i = 0$, $A_0 = \sum_{i=0}^d a_i X^i$
($a_i = \text{coeffs of } P$), $V_0 = 1$

\smallskip\noindent 5.\ %
{\bf repeat until} $\varphi_{|[0,d{-}1]} + 2^i \kappa \geq \varphi_A$

\smallskip\noindent 6.\ \hspace{5mm}%
lift $A_i$, $V_i$ and $P$ at enough precision

\smallskip\noindent 7.\ \hspace{5mm}%
compute 

\smallskip

\hspace{1cm}$\bullet$
$A_{i+1} = A_i + (P V_i \mod A_i)$

\hspace{1cm}\phantom{$\bullet$ }%
at precision $O(\varphi_{|[0,d{-}1]} + 2^{i+1} \kappa)$

\smallskip

\hspace{1cm}$\bullet$
$V_{i+1} = (2 V_i - V_i^2 \cdot (P \div A_{i+1})) \mod A_{i+1}$

\hspace{1cm}\phantom{$\bullet$ }%
at precision $O(x \mapsto \lambda_1 x + 2^{i+1}\kappa)$

\smallskip\noindent 8.\ \hspace{5mm}%
set $i = i+1$

\smallskip\noindent 9.\ 
{\bf return} $A_i + O(\varphi_A)$

\vspace{-1ex}\noindent\hrulefill

\begin{rem}
The precision needed at line 6 is of course governed by the computation
performed at line 7. Note that it can be either computed \emph{a
priori} by using Proposition~\ref{prop:NewtonprecEuclide} or dynamically 
by using relaxed algorithms from 
\cite{hoeven:02a,hoeven:07a,berthomieu-hoeven-lecerf:11a}. In both 
cases, it is in $O(\pi^{N_i})$ with $N_i = O(2^i \kappa + \min 
\NF(P))$.
\end{rem}

\noindent
It follows from Theorem~\ref{theo:slope-factor}, 
Remark~\ref{rem:slope-factor} and Proposition~\ref{prop:Newtonprecslope} 
that Algorithm {\tt slope\_factorisation\_Newton} is correct and 
stable. Using the standard soft-$O$ notation $\softO(\cdot)$ for
hiding logarithmic factor, our algorithm performs at most $\softO(n)$ 
combinatorial operations and $\softO(n)$ operations in $K$ at precision 
$O(\pi^N)$ with $N = O(\max \varphi_P - \min \NF(P))$ if one uses 
quasi-optimal algorithms for multiplication and Euclidean division of 
polynomials.

\medskip

\noindent
{\bf Lattice precision.}
The precision datum is given here by a lattice $H_P$ in $K_{\leq n{-}1}
[X]$; we shall then write
$P = P_\app + O(H_P)$
where $P_\app$ is a \emph{monic} approximation of the inexact polynomial 
$P$ we want to factor. We assume from now that $H_P$ is sufficiently 
small so that the Newton polygon of $P$ is well defined. We then can
define the function:
$$F = (F_A, F_B) : P_\app + H_P \to K_{=d}[X] \times K_{=n{-}d}[X]$$
mapping a polynomial $P$ to the couple $(A^{(1)}, B^{(1)})$ obtained
from it. We set $(A_\app^{(1)}, B_\app^{(1)}) = F(P_\app)$.

We make the following hypothesis \textbf{(H)}:

\medskip

\noindent
\hfill
\begin{minipage}{8cm}
The lattice $H_P$ is a first order lattice at every point
of $P_\app + H_P$ in the sense of \cite[Definition~3.3]{caruso-roe-vaccon:14a},
\emph{i.e.} for all $P \in P_\app + H_P$:

\vspace{-5mm}

$$F(P + H_P) = F(P) + F'(P)(H_P).$$
\end{minipage}
\hfill \null

\bigskip

\noindent
Obviously \textbf{(H)} gives an answer to the first question we have 
raised above: the precision on the couple $A^{(1)}$ is the lattice 
$H_{A^{(1)}}$ defined as projection on the first component of 
$F'(P_\app)(H_P)$. It turns out that it can be computed explicitely as 
shown by the next proposition.

\begin{prop}
\label{prop:precA1}
The application $F_A : P \mapsto A^{(1)}$ is of class $C^1$ on $P_\app + H_P$
and its differential at some point $P$ is the linear mapping
$$dP \mapsto dA^{(1)} = (V^{(1)} \: dP) \mod A^{(1)}$$
where $(A^{(1)}, B^{(1)}) = F(P)$ and $V^{(1)}$ is the inverse of 
$B^{(1)}$ modulo $A^{(1)}$.
\end{prop}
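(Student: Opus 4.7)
My plan is to argue in three steps: well-definedness of $F$ on $P_\app + H_P$, its $C^1$-regularity, and finally the explicit computation of the differential.

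First, hypothesis \textbf{(H)} ensures in particular that $\NP(P)$ stays constant on $P_\app + H_P$, so every such $P$ has an extremal point at abscissa $d$. Theorem~\ref{theo:slope-factor} then furnishes a monic divisor $A^{(1)}$ of $P$ of degree $d$ whose Newton function agrees with $\NF(P)_{|[0,d]} - \NF(P)(d)$, and Remark~\ref{rem:unicity} ensures its uniqueness. Hence $F = (F_A, F_B)$ is well defined, and $F_B = P / F_A$ inherits the same regularity as $F_A$.

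For the $C^1$-regularity, I would apply the ultrametric inverse function theorem to the multiplication map $M : (A, B) \mapsto AB$ from pairs of monic polynomials of degrees $d$ and $n{-}d$ to monic polynomials of degree $n$. Its differential at $(A^{(1)}, B^{(1)}) = F(P)$ sends a pair $(dA, dB)$ with $\deg dA < d$ and $\deg dB < n{-}d$ to $B^{(1)} \, dA + A^{(1)} \, dB$. The crucial point is that $A^{(1)}$ and $B^{(1)}$ are coprime: the slopes of $\NP(A^{(1)})$ are all at most $\lambda_0$ while those of $\NP(B^{(1)})$ are all at least $\lambda_1$, with $\lambda_0 < \lambda_1$, so $A^{(1)}$ and $B^{(1)}$ share no common root in any extension; equivalently, the inverse $V^{(1)}$ of $B^{(1)}$ modulo $A^{(1)}$ exists. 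Bezout then implies that $dM$ is an isomorphism, so $M$ is a local diffeomorphism near $(A^{(1)}, B^{(1)})$, and $F_A$ (obtained by composing $M^{-1}$ with the first projection) is of class $C^1$.

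To obtain the explicit formula, I would differentiate the identity $P = A^{(1)} B^{(1)}$, which gives $dP = dA^{(1)} \cdot B^{(1)} + A^{(1)} \cdot dB^{(1)}$. Multiplying by $V^{(1)}$ and reducing modulo $A^{(1)}$ yields
$$V^{(1)} \, dP \equiv V^{(1)} B^{(1)} \, dA^{(1)} \equiv dA^{(1)} \pmod{A^{(1)}}.$$
Since $dA^{(1)}$ has degree less than $d$ by monicity of $A^{(1)}$, we conclude that $dA^{(1)} = (V^{(1)} \, dP) \mod A^{(1)}$, which is the claimed formula.

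The main delicate step is the regularity claim; once differentiability is granted, the algebraic identity follows at once. An alternative route would be to leverage the Newton iteration of Theorem~\ref{theo:slope-factor}: the iterates $A_i$ depend polynomially (in particular $C^1$) on $P$, and the rate of convergence $2^i \kappa$ from Eq.~\eqref{eq:rateconv} can be controlled uniformly on $P_\app + H_P$, so the limit inherits the $C^1$-regularity.
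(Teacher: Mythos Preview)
Your proposal is correct and follows essentially the same route as the paper: both invert the multiplication map $(A,B)\mapsto AB$ using Bézout and the coprimality of $A^{(1)}$ and $B^{(1)}$ (coming from the disjointness of their Newton slopes), then differentiate $P=A^{(1)}B^{(1)}$ and reduce modulo $A^{(1)}$. The only cosmetic difference is that you multiply by $V^{(1)}$ before reducing, whereas the paper reduces first and then inverts $B^{(1)}$; your added discussion of well-definedness and the alternative route via uniform convergence of the Newton iteration are extra context not present in the paper's proof.
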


\begin{proof}
The function $F$ is injective and a left inverse of it is
$G : (A^{(1)},B^{(1)}) \mapsto A^{(1)}B^{(1)}$.
Clearly $G$ is of class $C^1$ and its differential is given by
\begin{equation}
\label{eq:dPdAdB}
(dA^{(1)}, dB^{(1)}) \mapsto dP = A^{(1)} \: dB^{(1)} + B^{(1)} \: dA^{(1)}.
\end{equation}
Thanks to Bézout Theorem, it is invertible as soon as $A^{(1)}$ and 
$B^{(1)}$ are coprime, which is true because $\NP(A^{(1)})$ and 
$\NP(B^{(1)})$ do not shape a common slope. As a consequence $F$ is of 
class $C^1$ and its differential is obtained by inverting 
Eq.~\eqref{eq:dPdAdB}. Reducing modulo $A^{(1)}$, we get $dP \equiv 
B^{(1)} \: dA^{(1)} \pmod {A^{(1)}}$. The claimed result follows after 
having noticed that $dA^{(1)}$ has degree at most $d{-}1$.
\end{proof}

A remarkable Corollary of Proposition~\ref{prop:precA1} asserts the
optimality of Proposition~\ref{prop:Newtonprecslope} in a particular 
case.

\begin{cor}
We assume~\textbf{(H)}. Let $\delta \in \R$.
When the precision of $P$ is given by $O(\NF(P)_{|[0,d-1]} + \delta)$,
the precision of $A^{(1)}$ given by 
Proposition~\ref{prop:Newtonprecslope} is optimal.
\end{cor}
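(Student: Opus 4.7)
The plan is to combine hypothesis \textbf{(H)}, which makes the precision on $A^{(1)}$ exactly computable as a lattice, with Proposition~\ref{prop:precA1}, which gives the explicit formula $dA^{(1)} = (V^{(1)}_\app \cdot dP) \mod A^{(1)}_\app$ for the differential. Proposition~\ref{prop:Newtonprecslope} has already established the inclusion $H_{A^{(1)}} \subseteq O(\varphi_{A^{(1)}})$; the task for optimality is to prove that every coordinate bound $\varphi_{A^{(1)}}(i) = \varphi(i) - \varphi(d) + \delta$ is actually attained by some perturbation $dP$ compatible with the input precision.

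Under the hypothesis $\varphi_P = \NF(P)_{|[0,d-1]} + \delta$, the lattice $H_P$ consists (modulo high-degree slots, which are known exactly) of polynomials of the form $\pi^{\lceil\delta\rceil} Q$ with $Q \in K_{<d}[X]$ satisfying $\NF(Q) \geq \NF(P)_{|[0,d-1]}$. Applying Proposition~\ref{prop:precA1} gives
$$dA^{(1)} = \pi^{\lceil\delta\rceil} \bigl((V^{(1)}_\app \cdot Q) \mod A^{(1)}_\app\bigr),$$
so after factoring out $\pi^{\lceil\delta\rceil}$ it suffices to analyze the $W$-linear map $T \colon Q \mapsto (V^{(1)}_\app \cdot Q) \mod A^{(1)}_\app$ on $K_{<d}[X]$. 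Because $V^{(1)}_\app$ is a unit in $K[X]/A^{(1)}_\app$, the map $T$ is a $W$-linear bijection of $K_{<d}[X]$ onto itself.

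The key step is to verify that $T$ restricts to an isomorphism between the Newton-function lattices $L_Q = \{Q : \NF(Q) \geq \NF(P)_{|[0,d-1]}\}$ and $L_R = \{R : \NF(R) \geq \varphi_{|[0,d-1]} - \varphi(d)\}$. The forward inclusion $T(L_Q) \subseteq L_R$ follows from the Newton-function estimate $\NF(V^{(1)}_\app)(x) \geq \lambda_1 x - \NF(P)(d)$, a consequence of the bound $b_1(V_\infty) \geq 0$ from Remark~\ref{rem:slope-factor} and the normalization $V^{(1)} = a_d^{-1} V_\infty$, combined with Theorem~\ref{theo:EDivisionNP} applied to the modular reduction by $A^{(1)}_\app$. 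The reverse inclusion $T^{-1}(L_R) \subseteq L_Q$ is obtained by running the same argument on the inverse map $R \mapsto (B^{(1)}_\app R) \mod A^{(1)}_\app$, relying on the fact that all slopes of $\NF(B^{(1)}_\app)$ are at least $\lambda_1$. Once $T$ is identified as a Newton-lattice isomorphism, one pulls back each generator $\pi^{\lceil \varphi(i)-\varphi(d)\rceil} X^i$ of $L_R$ to obtain an explicit $Q$, hence an explicit $dP = \pi^{\lceil\delta\rceil} Q \in H_P$, saturating the bound $\varphi_{A^{(1)}}(i)$ for that index $i$.

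The main obstacle is checking that the Newton-function bounds propagate through modular reduction sharply enough to produce a bijection of lattices, rather than just a one-sided inclusion. This hinges on the slope separation supplied by the factorization: $\NF(A^{(1)}_\app)$ has slopes at most $\lambda_0$ while $\NF(V^{(1)}_\app)$ and $\NF(B^{(1)}_\app)$ have slopes at least $\lambda_1 > \lambda_0$. This disjointness is exactly what prevents the Euclidean division in Theorem~\ref{theo:EDivisionNP} from losing information in either direction, and it is the underlying reason the Newton precision bound from Proposition~\ref{prop:Newtonprecslope} coincides with the exact lattice precision for this particular input shape.
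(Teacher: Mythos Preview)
Your approach is correct and aligns with the paper's proof. Both arguments use the differential formula $dA^{(1)} = (V^{(1)}\,dP)\bmod A^{(1)}$ from Proposition~\ref{prop:precA1} together with Theorem~\ref{theo:EDivisionNP}; the decisive step in each is showing that the inverse map $R \mapsto (B^{(1)}_\app R)\bmod A^{(1)}_\app$ carries the output Newton-lattice $H_{A^{(1)}}$ back into $H_P$. The paper is slightly more economical: it invokes Proposition~\ref{prop:Newtonprecslope} directly for the forward inclusion and only argues the reverse one, whereas you re-derive the forward inclusion from the bound $b_1(V_\infty)\geq 0$ --- correct but redundant. A minor quibble: your factorisation $dP = \pi^{\lceil\delta\rceil}Q$ tacitly assumes $\NF(P)(i)$ is an integer at every $i\in[0,d{-}1]$, which need not hold at non-extremal abscissae; it is cleaner to work with the lattice description $\{dP:\NF(dP)\geq\varphi_P\}$ throughout, as the paper does.
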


\begin{proof}
First remark that the $\delta$ defined in the statement of 
Proposition~\ref{prop:Newtonprecslope} coincide with the $\delta$
introduced in the Corollary. Define
$\varphi_{A^{(1)}} = \varphi_{|[0,d{-}1]} - \varphi(d) + \delta$.
Let $H_P$ (resp. $H_{A^{(1)}})$ be the lattice consisting of
polynomials of degree at most $n{-}1$ (resp. at most $d{-}1$)
whose Newton function is not less that $\varphi_P = \NF(P)_{|[0,d-1]} + \delta$ (resp. 
$\varphi_{A^{(1)}}$). We have to show that $F'_A(P_\app)(H_P) = 
H_{A^{(1)}}$. According to Proposition~\ref{prop:precA1}
the mapping $G : K_{\leq d{-}1}[X] \to K_{\leq n{-}1}[X]$,
$dA^{(1)} \mapsto (B_\app^{(1)} \: dA^{(1)}) \mod A_\app^{(1)}$
is a right inverse of $F'_A(P_\app)$. It is then enough to prove
that $G$ takes $H_{A^{(1)}}$ to $H_P$, which can be done easily
using Theorem~\ref{theo:EDivisionNP}.
\end{proof}

As for the second question, the discussion is similar to the case of 
Newton precision expect that we need a new stopping criterion. It is given 
by the following proposition.

\begin{prop}
\label{prop:stop}
We assume \textbf{(H)}. 

\vspace{-1mm}

\begin{enumerate}[(i)]
\item Let $\tilde A^{(1)} \in K_{=d}[X]$ such that
$\tilde A^{(1)} \tilde B^{(1)} \in P_\app + O(H_P)$ with
$\tilde B^{(1)} = P_\app \div \tilde A^{(1)}$.
Then $\tilde A^{(1)} \in A_\app^{(1)} + H_{A^{(1)}}$.
\vspace{-1mm}
\item Let in addition $\tilde V^{(1)} \in K_{\leq d{-}1}[X]$ such that:
$$\big(\tilde B^{(1)} \tilde V^{(1)} \cdot H_P\big) \mod \tilde A^{(1)}
= H_P \mod \tilde A^{(1)}.$$
Then $H_{A^{(1)}} = \big(\tilde V^{(1)} \cdot H_P\big) \mod \tilde A^{(1)}$.
\end{enumerate}
\end{prop}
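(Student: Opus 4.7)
The plan is to exploit hypothesis \textbf{(H)}, which says that the slope-factorization map $F$ is essentially affine on the ball $P_\app + H_P$, together with the explicit formula for $F'_A$ provided by Proposition~\ref{prop:precA1}. Both parts reduce to considering the auxiliary polynomial $P := \tilde A^{(1)} \tilde B^{(1)}$ and comparing its genuine slope factorization with the given data.

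For \emph{(i)}, the Euclidean division identity $P_\app = \tilde A^{(1)} \tilde B^{(1)} + (P_\app \mod \tilde A^{(1)})$ rewrites the hypothesis $\tilde A^{(1)} \tilde B^{(1)} \in P_\app + O(H_P)$ as $P_\app \mod \tilde A^{(1)} \in H_P$, so $P \in P_\app + H_P$. Applying \textbf{(H)} directly yields
$$F_A(P) \in F_A(P_\app) + F'_A(P_\app)(H_P) = A_\app^{(1)} + H_{A^{(1)}}.$$
It remains to identify $\tilde A^{(1)}$ with $F_A(P)$: both are monic of degree $d$ and divide $P$ exactly, so by the uniqueness argument of Remark~\ref{rem:unicity} they agree as soon as $\NF(\tilde A^{(1)}) = \NF(P)_{|[0,d]}$. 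This Newton-function matching is the key subtlety; it rests on the smallness of $H_P$ implicit in \textbf{(H)} (so that the Newton polygon is rigid on $P_\app + H_P$) combined with the candidate $\tilde A^{(1)}$ being close enough to the true slope factor $A_\app^{(1)}$ — a condition that is met in the practical setting, where this proposition supplies the stopping criterion of an iterative procedure whose iterates stay in a neighborhood of $A_\app^{(1)}$.

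For \emph{(ii)}, I first observe that \textbf{(H)} forces $F'(P)(H_P)$ to be the same lattice for every $P \in P_\app + H_P$: the set identity $P + H_P = P_\app + H_P$ combined with the two expansions $F(P+H_P) = F(P) + F'(P)(H_P)$ and $F(P_\app + H_P) = F(P_\app) + F'(P_\app)(H_P)$ forces $F'(P)(H_P) = F'(P_\app)(H_P)$. Specializing this equality to $P := \tilde A^{(1)} \tilde B^{(1)}$ — where by \emph{(i)} the factor $F_A(P)$ is $\tilde A^{(1)}$ and (by cancellation in the integral domain $K[X]$) $F_B(P) = \tilde B^{(1)}$ — Proposition~\ref{prop:precA1} yields
$$H_{A^{(1)}} = F'_A(P)(H_P) = (V^{(1)} \cdot H_P) \mod \tilde A^{(1)},$$
where $V^{(1)}$ is the genuine inverse of $\tilde B^{(1)}$ modulo $\tilde A^{(1)}$. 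To swap $V^{(1)}$ with $\tilde V^{(1)}$, apply the operator $X \mapsto (V^{(1)} X) \mod \tilde A^{(1)}$ to both sides of the assumed equality $(\tilde B^{(1)} \tilde V^{(1)} \cdot H_P) \mod \tilde A^{(1)} = H_P \mod \tilde A^{(1)}$: the congruence $V^{(1)} \tilde B^{(1)} \equiv 1 \pmod{\tilde A^{(1)}}$ collapses the left-hand side to $(\tilde V^{(1)} H_P) \mod \tilde A^{(1)}$, while the right-hand side becomes $(V^{(1)} H_P) \mod \tilde A^{(1)} = H_{A^{(1)}}$, giving the desired identity.

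The main obstacle is the Newton-function matching in part \emph{(i)}; all other steps are routine manipulations of lattices modulo $\tilde A^{(1)}$, combined with the constancy of the differential on $H_P$ provided by \textbf{(H)} and the formula of Proposition~\ref{prop:precA1}.
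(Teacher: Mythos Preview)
Your argument is correct and mirrors the paper's proof: set $\tilde P = \tilde A^{(1)} \tilde B^{(1)}$, identify $F(\tilde P) = (\tilde A^{(1)}, \tilde B^{(1)})$ via Remark~\ref{rem:unicity}, then apply \textbf{(H)} at $\tilde P$ together with Proposition~\ref{prop:precA1} (and in part~(ii) you make explicit the constancy $F'(\tilde P)(H_P)=F'(P_\app)(H_P)$ that the paper uses tacitly). The Newton-function matching you flag as the ``main obstacle'' is glossed over in the paper's proof as well---it simply invokes Remark~\ref{rem:unicity} without checking its hypothesis, the condition being supplied by context since the intended $\tilde A^{(1)}$ are normalized Newton iterates whose Newton function is controlled by~\eqref{eq:induction}---so this is not a defect peculiar to your write-up.
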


\begin{proof}
\emph{(i)} 
Set $\tilde P = \tilde A^{(1)} \tilde B^{(1)}$. We know by assumption
that $\tilde P = P_\app + O(H_P)$. Thus $F(\tilde P)$ is well defined.
The unicity of the slope factorization (\emph{cf} 
Remark~\ref{rem:unicity}) further implies that $F(\tilde P) = 
(\tilde A^{(1)}, \tilde B^{(1)})$. The claimed result now follows from
the hypothesis~\textbf{(H)}.

\emph{(ii)} By applying~\textbf{(H)} with $P = \tilde P$ and replacing 
$F'(\tilde P)$ by its expression given by Proposition~\ref{prop:precA1}, 
we find:
$$H_{A^{(1)}} = \big((\tilde B^{(1)})^{-1} \cdot H_P\big) \mod \tilde A^{(1)}.$$
Dividing $\big(\tilde B^{(1)} \tilde V^{(1)} H_P\big) \mod \tilde 
A^{(1)} = H_P \mod \tilde A^{(1)}$ by $\tilde B^{(1)}$ modulo $\tilde
A^{(1)}$, we get
$H_{A^{(1)}} = \big(\tilde V^{(1)} \cdot H_P\big) \mod \tilde A^{(1)}$
as expected.
\end{proof}

As a conclusion, the algorithm we propose consists in computing the 
Newton sequences $(A_i)$ and $(V_i)$ (following the strategy of the
algorithm \texttt{slope\_factorisation\_Newton} regarding to precision)
until we find a couple $(\tilde A^{(1)}, \tilde V^{(1)})$ satisfying 
the requirements (i) and (ii) of Proposition~\ref{prop:stop}. Once this 
couple has been found, one may safely output $\tilde A^{(1)} + 
O\big(\big(\tilde V^{(1)} \cdot H_P\big) \mod \tilde A^{(1)}\big)$ under
\textbf{(H)}.
The resulting algorithm has quasi-optimal running 
time and optimal stability.

\bibliographystyle{plain}

\begin{bibdiv}
\begin{biblist}

\bib{berthomieu-hoeven-lecerf:11a}{article}{
      author={Berthomieu, J{\'e}r{\'e}my},
      author={van~der Hoeven, Joris},
      author={Lecerf, Gr{\'e}goire},
       title={{Relaxed algorithms for $p$-adic numbers}},
        date={2011},
     journal={J. Th{\'e}orie des Nombres des Bordeaux},
      volume={23},
      number={3},
       pages={{541\ndash 577}},
}

\bib{caruso:16}{unpublished}{
      author={Caruso, Xavier},
       title={Slope factorization of ore polynomials},
        date={2016},
        note={in preparation},
}

\bib{caruso-roe-vaccon:14a}{article}{
      author={Caruso, Xavier},
      author={Vaccon, Tristan},
      author={Roe, David},
       title={Tracking $p$-adic precision},
        date={2014},
     journal={LMS Journal of Computation and Mathematics},
      volume={17 (Special issue A)},
       pages={274\ndash 294},
}

\bib{caruso-roe-vaccon:15a}{inproceedings}{
      author={Caruso, Xavier},
      author={Vaccon, Tristan},
      author={Roe, David},
       title={$p$-adic stability in linear algebra},
        date={2015},
   booktitle={Proceedings of the 2015 acm on international symposium on
  symbolic and algebraic computation},
   publisher={ACM},
     address={New York},
       pages={{101\ndash 108}},
}

\bib{dwork-geratto-sullivan:Gfunctions}{book}{
      author={Dwork, Bernard},
      author={Gerotto, Giovanni},
      author={Sullivan, Francis},
       title={{An introduction to $G$-functions}},
   publisher={Princeton U.P.},
     address={Princeton},
        date={1994},
}

\bib{guardia-montes-nart:08a}{article}{
      author={Gu\`ardia, Jordi},
      author={Montes, Jes\'us},
      author={Nart, Enric},
       title={Newton polygons of higher order in algebraic number theory},
        date={2012},
     journal={Transactions of the AMS},
      volume={364},
      number={1},
       pages={361\ndash 416},
}

\bib{guardia-nart-pauli:12a}{article}{
      author={Guàrdia, Jordi},
      author={Nart, Enric},
      author={Pauli, Sebastian},
       title={Single-factor lifting and factorization of polynomials over local
  fields},
        date={2012},
     journal={Journal of Symbolic Computation},
      volume={47},
      number={11},
       pages={1318 \ndash  1346},
}

\bib{montes:99a}{thesis}{
      author={Montes, Jes\'us},
       title={Pol\'igonos de newton de orden superior y aplicaciones
  aritm\'eticas},
        type={Ph.D. Thesis},
        date={1999},
}

\bib{pauli:10a}{inproceedings}{
      author={Pauli, Sebastian},
       title={{Factoring polynomials over local fields II}},
        date={2010},
   booktitle={Algorithmic number theory, 9th international symposium},
      editor={Hanrot, Guillaume},
      editor={Morain, Fran{\c c}ois},
      editor={Thom\'e, Emmanuel},
     address={Nancy, France},
}

\bib{hoeven:02a}{article}{
      author={van~der Hoeven, Joris},
       title={{Relax, but don't be too lazy}},
        date={2002},
     journal={J. Symbolic Comput.},
      volume={34},
      number={6},
       pages={479\ndash 542},
}

\bib{hoeven:07a}{article}{
      author={van~der Hoeven, Joris},
       title={New algorithms for relaxed multiplication},
        date={2007},
     journal={J. Symbolic Comput.},
      volume={42},
      number={8},
       pages={792\ndash 802},
}

\end{biblist}
\end{bibdiv}

\end{document}